\def\@rmrk#1#2{\refstepcounter
	{#1}\@ifnextchar[{\@yrmrk{#1}{#2}}{\@xrmrk{#1}{#2}}}
\makeatletter\@addtoreset{equation}{section}\makeatother
\newfont{\bfit}{cmbxti10 scaled 2000}
\newfont{\biggi}{cmr12 scaled 2000}
\newcommand{\eps}{\varepsilon}
\newcommand{\R}{\mathbb{R}}
\newcommand{\N}{\mathbb{N}}
\newcommand{\prob}{\mathbb{P}}
\newcommand{\me}{\mathbb{E}}
\renewcommand{\P}{\mathbb{P}}
\newcommand{\skrib}{{\mathcal B}}
\newcommand{\skric}{{\mathcal C}}
\newcommand{\skrid}{{\mathcal D}}
\newcommand{\skrie}{{\mathcal E}}
\newcommand{\skrig}{{\mathcal G}}
\newcommand{\skrih}{{\mathcal H}}
\newcommand{\skrik}{{\mathcal K}}
\newcommand{\skrim}{{\mathcal M}}
\newcommand{\skriw}{{\mathcal W}}
\newcommand{\skris}{{\mathcal S}}
\newcommand{\skriy}{{\mathcal Y}}
\newcommand{\sfrac}[2]{\mbox{$\frac{#1}{#2}$}}
\def\1{{\mathchoice {1\mskip-4mu\mathrm l}      
		{1\mskip-4mu\mathrm l}
		{1\mskip-4.5mu\mathrm l} {1\mskip-5mu\mathrm l}}}
\newcommand{\eq}{\begin{equation}}
	\newcommand{\en}{\end{equation}}
\newenvironment{Proof}
{\vskip0.1cm\noindent{\bf Proof. }{\hspace*{0.3cm}}}%
{\nopagebreak {\hspace*{\fill}\rule{2mm}{2mm}}\\ }
\renewcommand{\subsection}{\secdef \subsct\sbsect}
\newcommand{\subsct}[2][default]{\refstepcounter{subsection}
	\vspace{0.15cm}
	{\flushleft\bf \arabic{section}.\arabic{subsection}~\bf #1  }
	\nopagebreak\nopagebreak}
\newcommand{\sbsect}[1]{\vspace{0.1cm}\noindent
	{\bf #1}\vspace{0.1cm}}
\newtheorem{theorem}{Theorem}[section]
\newtheorem{lemma}[theorem]{Lemma}
\newtheorem{cor}[theorem]{Corollary}
\newtheoremstyle{thm}{1.5ex}{1.5ex}{\itshape\rmfamily}{}
{\bfseries\rmfamily}{}{2ex}{}
\newtheoremstyle{rem}{1.3ex}{1.3ex}{\rmfamily}{}
{\itshape\rmfamily}{}{1.5ex}{}
\theoremstyle{rem}
\newtheorem{remark}{{\slshape\sffamily remark}}[]
\def\thebibliography#1{\section*{References}
	\list%
	{\arabic{enumi}.}
	{\settowidth\labelwidth{[#1]}\leftmargin\labelwidth
		\advance\leftmargin\labelsep
		\parsep0pt\itemsep0pt
		\usecounter{enumi}}
	\def\newblock{\hskip .11em plus .33em minus .07em}
	\sloppy                   
	\sfcode`\.=1000\relax}
\begin{document}
	\title[LLDP  and LDP  for Sub-critical Communication Networks]
	{\Large  Large  Deviations and  Information theory   for  Sub-Critical  for the Signal -to- Interference -Plus- Noise  Ratio  Randon Network Models}
	
	\author[]{}
	
	\maketitle
	\thispagestyle{empty}
	\vspace{-0.5cm}

	\centerline{\sc By  E.  Sakyi-Yeboah$^1$, P. S. Andam,  L.  Asiedu$^1$ and  K.  Doku-Amponsah$^{1,2} $}
	
	{$^1$Department  of  Statistics and Actuarial  Science, University of  Ghana,  BOX  LG 115, Legon,Accra}

	{$^2$ Email: kdoku-amponsah@ug.edu.gh}

	{$^2$ Telephone: +233205164254}
	\vspace{0.5cm}
	
	\renewcommand{\thefootnote}{}

	\footnote{\textit{Acknowledgement: }  This  Research work has been  supported  by funds  from  the  Carnegie Banga-Africa Project,  University  of  Ghana}
	\renewcommand{\thefootnote}{1}

	\vspace{-0.5cm}

	\begin{quote}{\small }{\bf Abstract.}
		The article obtains  large  deviation  asymptotic  for  sub-critical communication  networks  modelled  as   signal-interference-noise-ratio(SINR) random   networks. To achieve  this,  we  define  the  empirical  power  measure  and  the  empirical  connectivity  measure,  as well as  prove   joint large  deviation  principles(LDPs)  for the    two  empirical  measures on  two  different  scales. Using  the  joint LDPs,  we  prove  an  Asymptotic  equipartition property(AEP) for    wireless   telecommunication  Networks  modelled  as  the subcritical SINR random networks. Further,  we  prove a Local  Large  deviation  principle(LLDP) for the sub-critical  SINR random network. From the LLDPs,  we  prove  the large  deviation  principle, and  a  classical  McMillan  Theorem for  the stochastic SINR  model  processes.  Note that,  the  LDPs  for the  empirical measures  of this  stochastic  SINR random  network   model  were derived    on  spaces  of  measures  equipped  with  the  $\tau-$  topology, and  the  LLDPs  were deduced in  the  space  of  SINR  model  process  without  any  topological  limitations. We  motivate  the  study  by  describing   a  possible  anomaly  detection test  for SINR   random  networks.

	\end{quote}\vspace{0.5cm}

	\textit{Keywords: } Large deviation principle, Sub-critical SINR random  network model,  Poisson  point  process, Empirical power  measure, Empirical connectivity measure. Relative  entropy,Kullback  action,Spectral potential, Anomaly  detection test, Cybersecurity.
	
	\vspace{0.3cm}
	
	\textit{AMS Subject Classification:} 60F10, 05C80, 68Q87, 28D20

	\vspace{0.3cm}
	
	
	\section{Introduction  }
In  telecommunication,  Wireless  networks  are usually  modelled  by  the  SINR random  networks.  In  the  SINR  random  network  model  two  nodes  are  deemed  to  communicate  if  SINR  is  bigger  than  a   certain  threshold  as  specified  by some  technical constant.  
 In the process of addressing the additional requirement imposed on wireless communication  networks, in particular, a higher availability of a highly accurate modeling of the SINR is required.  Example, each  transmission   may  be  equipped  with  some  battery  power  which  may  be  called  the  mark  of  the  node and  the quantity  SINR    defined  by  the  inclusion of  the  marks   in  the  definition.  Further study of  the  SINR  network model  has  shown  that  an  SINR model  of interference is a more realistic model of interference than the protocol model of interference: a receiver node receives a packet so long as the signal to interference plus noise ratio is above a certain threshold.  See,  Bakshi et  al.~\cite{BJN2017}.\\
	
 There  are  many  applications  of  large deviation  techniques  to the  SINR  networks, which are used  as   models  for  telecommunication  networks. Some  of these  applications  include, the  analysis  of  bi-stability  in  networks, such as  notorious  bi-stability  in  multiple  access  protocols  the  Aloha,  and the  stochastic  behaviour  of  ATM  the  admission control,  sizing  of  internal  buffers,  and  the  simulation  of  ATM models, see,\cite{We1995}. and  prevention of  cyber-attacks  on   wireless  telecommunication  networks,  see  example  \cite{PC2008}.\\

Cybersecurity  of  the  devices  in  a  telecommunication  system is  a  major   issue when the  devices  become  increasing  dependent  on  computer  and  other  local  networks.  And  an  anomaly detection in  the  devices  networks  is  key  to  avoiding  disruption  in  the  telecommunication  systems.  Cybersecurity  of  the  intelligent electronic  devices  in  telecommunication  substations  has  been  recognized  as  a  critical  issue  for  smooth  running  of  the  system. One  main approach  to  dealing  with  these  issues  is to  develop  new  technologies  to  detect  and  disrupt  any  malicious  activities  over  the  networks.An  Anomaly  detection  may  be  regarded  as  an  early  warning mechanism to  extract  relevant  cybersecurity events from  devices  locations  and  correlate  these  events.  Large  deviation principles   have  played  key  role  in  the  formulation  of  efficient  anomaly  inference  algorithm  for  systems such  as  power grid, Wireless Sensor  Network  systems  and   Telecommunication  systems. \\

	In  this  article,  we  prove   joint  large  deviation  principles  on  the  scales  $\lambda$  and  $\lambda^{2} a_{\lambda}$,  where  $\lambda$ is the  intensity  measure  of  the  underlining PPP  of  the  subcritical  SINR  model. See, \cite{SAD2020}  or  \cite{SKAD2020} or \cite{SAAD2020}   for  similar  results  fore  the dense  SINR  random  network  models. From  these  LDPs,  we  prove  an  asymptotic  equipartition  property; see example  \cite{SAD2020},  for  the  SINR  models.\\
	
	Further,  the study shows  a  LLDP for the  SINR  models. See example, \cite{SAD2020} and references therein. From  the  LLDP,  we deduce  asymptotic  bounds  on  the  cardinality  of  the  set  of  SINR  models  for  a  given  typical   empirical  marked  measure. In addition, the study shows that from  the LLDP  an  LDP for  the  SINR  modelled  processes.\\

	\subsection{Background}\label{SINR}\label{Sec1}

	This study set a dimension  $d\in\N$    and some  measureable  set  $\skrid\subset \R^k$    with reference to  the  Borel-$\sigma$ algebra  $\skrib(\R^d).$ 
	Given  $\lambda \mu:\skrid \to [0,1]$, an intensity measure and probability kernel density function from  $\skrid$   to  $\R^{+}$,  $\skrik$ and a  path loss  model, $\pi(\eta)=\eta^{-\alpha}, $   where  $\alpha \in \R^{+},$ and   
	some technical constraint; $\iota^{(\lambda)}, \zeta^{(\lambda)}:\R^{+}\to \R^{+}.$ The study defined  the  SINR network  model to as  follows:

	\begin{itemize}
		\item  We select  $\sigma=(\sigma_u)_{u\in I},$   a  Poisson  Point  Process  (PPP)  with rate measure  $\lambda \mu:\skrid \to [0,1]$.
		\item Given the  process  $\sigma,$  the locations,    each  $\sigma_u  $ is  assigned  a mark or  power  $\ell(\sigma_u)=\ell_u$  independently  according  to  the  kernel density function   $\skrik(\cdot \,,\,\sigma_u).$
		\item For any two set of marked points  $((\sigma_u,\ell_u),(\sigma_v,\ell_x))$  we link an edge  if and only if  $$SINR(\sigma_u,\sigma_v, \sigma) \ge \iota^{(\lambda)}(\ell_v) \mbox{  and      $SINR(\sigma_v,\sigma_u, \sigma)\ge \iota^{(\lambda)}(\ell_u),$}$$  where $$SINR(\sigma_v,\sigma_u, \sigma)=\frac{\ell_u \pi(\|\sigma_u-\sigma_v\|)}{N_0 +\zeta^{(\lambda)}(\ell_v)\sum_{u\in I\setminus\{v\} }\ell_u\pi(\|\sigma_u-\sigma_v\|)}$$
	\end{itemize}
	
We  let  $E$  denote  the  set   of  edges  in  the  SINR  random  network  and observe  $ Y^{\lambda}:=Y^{\lambda}(\ell, \sigma, \mu)=\Big\{[(\sigma_u,\ell_u), u\in I], \, E\Big\} $  under  the  joint  law  of  the  marked PPP  and    the  network.  In this article, we  call  $Y^{\lambda}$    an SINR  Network  model  and   $ (\sigma_u,\ell_u):= \sigma_u^{\lambda}$  as the   mark  of  site $u.$ Recall from \cite{SAD2020}  that  if  $N_0=0$, then 
	the connectivity function  of  the SINR random network model,   $ T^{\lambda}$, is  defined  as   $ T^{\lambda}((u,\ell_u),(v,\ell_v))= e^{-\lambda t_{\lambda}^{\skrid} ((u,\ell_u),(v,\ell_v)) },$  where
	
	$$t_{\lambda}^{\skrid} ((u,\ell_u),(v,\ell_v))= \int_{D} \Big[\sfrac{ \iota^{(\lambda)}(\ell_u) \zeta^{(\lambda)}(\ell_u)  }{\iota^{(\lambda)}(\ell_u) \zeta^{(\lambda)}(\ell_u)+(\|r\|^{\eta}/\|u-v \|^{\eta})} + \sfrac{ \iota^{(\lambda)}(\ell_v) \zeta^{(\lambda)}(\ell_v)  }{\iota^{(\lambda)}(\ell_v) \zeta^{(\lambda)}(\ell_v)+(\|r\|^{\eta}/\|v-u \|^{\eta})}\Big] \mu(dr).$$

	This article assumes that there exists  $a_{\lambda}$   and  a function  $t :\skrid\times \R_+\to (0,\infty)$ such that  $\lambda^{2}a_{\lambda}\to 0 $  and $$\displaystyle \lim_{\lambda\uparrow\infty}a_{\lambda}^{-1}T^{\lambda}((a,\ell_a),(b,\ell_b))=t((a,\ell_a),(b,\ell_b)).$$
	
	Sakyi-Yeboah et. al~\cite{SKAD2020} and Sakyi-Yeboah et. al~\cite{SAAD2020} investigates the  critical SINR network model (that is  $\lambda a_{\lambda}\to 1$) and super-critical SINR network model ( that is $\lambda a_{\lambda}\to \infty $) respectively . In  this articles, we  shall  focus this  study   on sub-critical SINR Networks( that is $\lim_{\lambda\to\infty}\lambda a_{\lambda}\to 0 $).\\
	
	For a given set  $\skrid)$  we  define  $\skris(\skrid)$  by

	\begin{equation}
		\skris(\skrid)=\cup_{x\subset \skrid}\Big\{x:\,\, |x\cap W|<\infty\,\, ,\mbox{for\, any  bounded  $W\subset \skrid$ }\Big \}.
	\end{equation}

	Let  $\skriw= \skris(\skrid\times\R_+)$   and    $\skrim(\skriw)$,  represent  the  space  of  positive  measures  on  the  space  $\skriw$   equipped  with  $\tau-$  topology. Note, $\skriw$ is a locally  finite  subset  of  the  set  $\skrid\times\R_+.$  See, example,  \cite{SKAD2020}.  Without  abuse  of  notation  we  shall  refer  to  $\skrim(\skriw\times\skriw)$  as  the  space  of  symmetric  measure  on  $\skriw\times\skriw$  endowed  with  the  $\tau-$ topology.
	For any SINR random network model  $Y^\lambda$  we  define a probability measure, the
	\emph{empirical power measure}, ~ $M_1^{Y^\lambda}\in\skrim(\skriw)$,~by
	$$M_1^{Y^\lambda}\big((a,\ell_a)\big ):=\frac{1}{\lambda}\sum_{u\in \skriw}\delta_{\sigma_u^{\lambda}}\big((a,\ell_a)\big)$$
	and a  finite measure, the \emph{empirical connectivity measure}
	$M_2^{Y^\lambda}\in\skrim(\skriw\times \skriw),$ by
	$$M_2^{Y^\lambda}\big((a,\ell_a),(b,\ell_b)\big):=\frac{1}{\lambda^2a_{\lambda}}\sum_{(u,v)\in E}[\delta_{(\sigma_u^{\lambda},\sigma_v^{\lambda})}+
	\delta_{(\sigma_v^{\lambda},\sigma_u^{\lambda})}]\big((a,\ell_a),(b,\ell_b)\big).$$
	
	It should be noted that the  total mass  $\|M_1^{Y^\lambda}\|$ of  the  empirical power  measure  is $\1$  and  total  mass  of  
	the empirical connect measure is
	$2|E|/\lambda^2a_{\lambda}$.

	\subsection{Motivation:
	Anomaly detection  in  spatial  networks}
	
	Consider,  SINR random  network model as  a  model  that  account  for  the  connectivity structure  of  the   Wireless  telecommunication networks (WTN). In  particular, consider the  subcritical  SINR random networks  as  model  for the  WTNs  since,  in  the  implementation,   the  multihop network  formed  by  the  sensor   nodes  may  adopt  a  network  structure.  The  network will  be  formed  randomly  according  to an  arbitrary   rule  that is  dependent  on  the  distances  between  the  device  locations. Assume  the  device  locations  are  marked  according  to  their  battery  power,  and  the  propagation of  events  is  un-directed on  the  network.  Our  objective  is  to  estimate  network  parameters  and  possible  identify  possible  deviations form  the  actual  values.
	
	For instance, given  a  long  sequence  of  realization $Y^{\lambda,k}$ of this  sub-critical  marked  SINR random  network, one would like to  approximate  parameter   of  the  model,  $\mu\times \skrik$  and  $t$,    by    taking  the  average  frequencies  of  the  corresponding  samples. In  particular,  if    $M_1^{Y^{\lambda,k}}$ and 	$M_2^{Y^{\lambda,k}}$;  the  empirical  power measure  and  the  empirical  connectivity  measure  of $Y^{\lambda}$, the  $k^{th}$  realization then 
	
	$$\displaystyle \lim_{k\to\infty}\sfrac{1}{k}\sum_{r=1}^{k}M_1^{Y^{\lambda,r}}(a,\ell_a)\to\mu\otimes\skrik(a,\,\ell_a)$$  and 
	
	$$\lim_{k\to\infty}\Big[\sfrac{1}{k}\sum_{r=1}^{k}M_2^{Y^{\lambda,r}}\Big((a,\ell_a),(b,\ell_b)\Big)/\sfrac{1}{k}\sum_{r=1}^{k}M_2^{Y^{\lambda,k}}(a,\ell_a)\otimes\sfrac{1}{k}\sum_{r=1}^{k}M_1^{Y^{\lambda,r}}(b,\ell_b)\Big]\to t\Big((a,\ell_a),(b,\ell_b)\Big),$$  with  probability  $1.$

	Assuming  that  we  have  estimated  $\mu\otimes \skrik$  and  $t.$  We   are  interested  in  a  test  that  determines  whether  a  particular  realization  $Y^{\lambda}$  is  typical  or  not. Thus,  we   want  to  differentiate  between  $\mu\times\skrik$  and  $t$ (Hypothesis $H_0$)  and  any  other  unknown  law (Hypothesis $H_1$). Theorem~\ref{main1a}  will be   the  bases of  providing generalized  Neyman-Pearson  criterion,See \cite[pp.96-100]{DZ1998},and  hence  an   anomaly  detection  test for  the sub-critical marked  SINR  random networks. \\

		This article is structured as follows:  Section~\ref{Sec2} presents  the  main  results; Theorem~\ref{main1a}, Theorem~\ref{main1b}, Theorem~\ref{main1c}, Corollary~\ref{cardinality}  and  Corollary~\ref{main2d}. In  Section~\ref{Sec3}   we prove  the main  results of  the  article, Theorem~\ref{main1a}.  Section~\ref{Sec4} provides the  proof  of  the  AEP,  see  Theorem~\ref{main1b}  and  Section~\ref{Sec5}; Proof  of  Theorem~\ref{main1c}, Corollary~\ref{cardinality} and  Corollary~\ref{main2d}. Lastly,   Section~\ref{Sec6} presents the conclusion to the article.

	\section { Main  Results}\label{mainresults}\label{Sec2}

	Theorem~\ref{main1a},  is  a  joint  large  deviation principle   for  the  empirical  measures  of  the SINR  network models.With reference  from  Subsection~\ref{Sec1}, we  recall  the  definition  of  $t_{\lambda}^{\skrid}$  as  
	
	$$t_{\lambda}^{\skrid} ((a,\ell_a),(b,\ell_b))= \int_{D} \Big[\sfrac{ \iota^{(\lambda)}(\ell_u) \zeta^{(\lambda)}(\ell_u)  }{\iota(\ell_u) \zeta(\ell_u)+(\|r\|^{\eta}/\|i-y \|^{\eta})} + \sfrac{ \tau^{(\lambda)}(\ell_v) \gamma^{(\lambda)}(\ell_v)  }{\tau^{(\lambda)}(\ell_v) \gamma^{(\lambda)}(\ell_v)+(\|r\|^{\eta}/\|y-x \|^{\eta})}\Big] \mu(dr)$$    and   note that	 $$t\beta\otimes\beta((a,\ell_a),(b,\ell_b))):=t((a,\ell_a),(b,\ell_b))\mu((a,\ell_a))\mu((b,\ell_b)).$$  

	\begin{theorem}	\label{main1a}
		Let   $Y^{\lambda}$  is  a sub-critical marked SINR network model  with rate measure
		$\lambda \mu:\skrid \to [0,1]$ and   a  power transition kernel function  $\skrik(\cdot, y)=ce^{-cy},  y>0$  and  path  loss  function   $\pi(\eta)=\eta^{-\alpha}, $  for  $\alpha>0.$   Thus, the link  kernel function  $T^{\lambda} $  of   $Y^{\lambda}$  satisfies  $a_{\lambda}^{-1}T^{\lambda}\to t$  and  $\lambda a_{\lambda} \to 0.$  Then, as  $\lambda\to \infty$,  the  pair of  measures  $(M_1^{Y^\lambda},M_2^{Y^\lambda})$  satisfies a  large  deviation principle  in the  space 
		$\skrim(\skriw)\times \skrim(\skriw\times\skriw)$  
		
		\begin{itemize}
			\item [(i)]	 with speed $\lambda$   and  a  good  rate  function

			\begin{equation}
				\begin{aligned}
					I^{1}\big(\beta,\phi\big)= \left\{\begin{array}{ll}H\Big(\beta\Big |\mu\otimes \skrik\Big)&\,\,\mbox{ if $ \phi=t\beta\otimes\beta $ }\\
						\infty & \mbox{elsewhere.}
					\end{array}\right.
				\end{aligned}
			\end{equation}
			
			\item[(ii)]with  speed   $\lambda^2a_{\lambda}$  and  good rate function
			
		\begin{equation}
		\begin{aligned}
		I^{2}\big(\beta,\phi\big)= \left\{\begin{array}{ll}	\skrih(\phi\|t\beta\otimes\beta),&\,\,\mbox{ if $ \beta=\mu\otimes \skrik $ }\\
		\infty & \mbox{elsewhere.}
		\end{array}\right.
		\end{aligned}
		\end{equation}
		\end{itemize}
		
		where

		\begin{equation}
			\begin{aligned}
				\skrih(\phi\|t\beta\otimes\beta):= \left\{\begin{array}{ll} H(\phi\,\|\,t\beta\otimes \beta)+\Big(\|t\beta \otimes\beta\|-\|\phi\|\Big),  & \mbox{if  $\|\phi\|>0.$  }\\
					\infty & \mbox{elsewhere.}		
				\end{array}\right.
			\end{aligned}
		\end{equation}

	\end{theorem}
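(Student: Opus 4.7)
The plan is to exploit the two-level structure of the SINR network: conditionally on the marked Poisson configuration $(\sigma_u,\ell_u)_{u\in I}$, the edge indicators are independent Bernoulli variables with success probabilities $T^{\lambda}\asymp a_{\lambda}t$, and at the two speeds $\lambda$ and $\lambda^{2}a_{\lambda}$ different components of this randomness dominate, each forcing the other onto its conditional mean. Throughout, I would work with the log-moment generating functional of the pair $(M_{1}^{Y^{\lambda}}, M_{2}^{Y^{\lambda}})$ tested against bounded measurable functions $(f,g)$, exploiting Mecke's formula for the marked PPP and the conditional independence of edges given the points.

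For part (i), at speed $\lambda$, I would first establish the classical Sanov-type LDP for $M_{1}^{Y^{\lambda}}$ alone via a Georgii-type exponential change of measure on the marked PPP, yielding marginal rate $H(\beta\,\|\,\mu\otimes\skrik)$. Then, conditional on $M_{1}^{Y^{\lambda}}\approx\beta$, the expected number of edges is of order $\lambda^{2}a_{\lambda}=o(\lambda)$ by sub-criticality $\lambda a_{\lambda}\to 0$, so when viewed at the faster speed $\lambda$ the empirical connectivity measure satisfies a degenerate LDP that concentrates exponentially on its conditional mean $t\beta\otimes\beta$. I would assemble the joint LDP by showing that $(M_{1}^{Y^{\lambda}}, M_{2}^{Y^{\lambda}})$ is exponentially equivalent, at speed $\lambda$, to $(M_{1}^{Y^{\lambda}}, tM_{1}^{Y^{\lambda}}\otimes M_{1}^{Y^{\lambda}})$, and then applying the contraction principle to the continuous map $\beta\mapsto(\beta, t\beta\otimes\beta)$.

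For part (ii), at the slower speed $\lambda^{2}a_{\lambda}$, the roles are reversed. Since $\lambda^{2}a_{\lambda}=o(\lambda)$, any deviation of $M_{1}^{Y^{\lambda}}$ from $\mu\otimes\skrik$ carries rate $+\infty$ at this speed, so the power marginal is pinned to $\mu\otimes\skrik$. Conditional on the points, each edge is Bernoulli with success probability $a_{\lambda}t$, and the normalized sum of these rare-event indicators has a Poissonian LDP structure. Tilting the edge probabilities by a bounded test function $g$ on $\skriw\times\skriw$ and Taylor-expanding $\log(1+a_{\lambda}(e^{g}-1)t)$, the scaled cumulant converges to $\int (e^{g}-1)\, t\, d(\mu\otimes\skrik)^{\otimes 2}$. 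Its Fenchel-Legendre transform gives precisely the Poissonian rate $\skrih(\phi\,\|\,t\beta\otimes\beta)=H(\phi\,\|\,t\beta\otimes\beta)+(\|t\beta\otimes\beta\|-\|\phi\|)$, in which the mass-correction term is the signature of the Bernoulli-to-Poisson approximation.

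The main obstacle will be controlling exponential tightness in the $\tau$-topology, which is strictly finer than the weak topology, so standard Prokhorov-type arguments do not suffice. Following the approach used for the critical and super-critical regimes in \cite{SKAD2020, SAAD2020}, this requires Dawson-Gärtner projective limit arguments against bounded measurable test functions, together with uniform control of the error in $a_{\lambda}^{-1}T^{\lambda}\to t$ so that the Bernoulli-to-Poisson comparison is carried out at the LDP level without inflating the rate. A secondary difficulty is verifying goodness of the rate functions, which follows from lower semi-continuity of $H$ and $\skrih$ together with $\tau$-continuity of the map $\beta\mapsto t\beta\otimes\beta$ on sub-level sets of $H(\,\cdot\,\|\,\mu\otimes\skrik)$.
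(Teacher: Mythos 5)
Your overall architecture coincides with the paper's: condition on the marked point configuration, compute the conditional scaled cumulant generating functional of $M_2^{Y^\lambda}$ over a finite partition $W_1,\dots,W_n$ of $\skrid\times\R_+$, pass to the limit, and apply G\"artner--Ellis. For part (ii) your Taylor expansion of $\log\bigl(1-T^{\lambda}(u,v)(1-e^{q(u,v)})\bigr)$ reproduces exactly the paper's Lemma~\ref{main1da}, giving the limit $-\tfrac12\langle 1-e^{q},\,t\beta\otimes\beta\rangle$ whose Legendre transform is $\tfrac12\skrih(\phi\,\|\,t\beta\otimes\beta)$, and your ``pinning'' of $M_1^{Y^\lambda}$ at the slower speed is what the paper extracts from its mixture representation. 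The only structural difference is the assembly step: the paper writes the joint law as the mixture $d\tilde P^{\lambda}(\beta_\lambda,\ell_\lambda)=dP^{(\lambda)}_{\beta_\lambda}(\ell_\lambda)\,dP^{(\lambda)}(\beta_\lambda)$ and invokes Biggins' theorem on LDPs for mixtures together with exponential tightness and lower semicontinuity of $I^1,I^2$, whereas you propose exponential equivalence plus the contraction principle for (i); either route is acceptable once the two conditional LDPs are in hand.

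The genuine gap is in your part (i). You argue that because the expected number of edges is of order $\lambda^{2}a_{\lambda}=o(\lambda)$, the measure $M_2^{Y^\lambda}$ ``concentrates exponentially on its conditional mean'' at speed $\lambda$. The implication runs the wrong way: if only $o(\lambda)$ conditionally independent Bernoulli edge variables are in play, gross deviations of $M_2^{Y^\lambda}$ are \emph{cheap}, not forbidden, at speed $\lambda$. Concretely, conditional on $M_1^{Y^\lambda}=\beta$ the event that the network has no edges at all has probability of order $\exp\bigl(-\tfrac12\lambda^{2}a_{\lambda}\|t\beta\otimes\beta\|(1+o(1))\bigr)=\exp(-o(\lambda))$, so $\liminf_{\lambda\to\infty}\tfrac1\lambda\log\P\{M_2^{Y^\lambda}=0\mid M_1^{Y^\lambda}=\beta\}=0$, which is incompatible with a rate equal to $+\infty$ off $\{\phi=t\beta\otimes\beta\}$ and destroys the claimed exponential equivalence of $(M_1^{Y^\lambda},M_2^{Y^\lambda})$ with $(M_1^{Y^\lambda},tM_1^{Y^\lambda}\otimes M_1^{Y^\lambda})$ at speed $\lambda$. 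The concentration you want is the \emph{super}-critical phenomenon, where $\lambda a_\lambda\to\infty$ so that $\lambda^{2}a_{\lambda}\gg\lambda$; this is also where the paper's own Lemma~\ref{main1ca} is strained, since the expansion $e^{q(u,v)/\lambda a_{\lambda}}\approx 1+q(u,v)/\lambda a_{\lambda}$ used there requires $\lambda a_{\lambda}\to\infty$ rather than $\lambda a_{\lambda}\to 0$. To make (i) go through in the sub-critical regime you would need an explicit additional growth condition relating $\lambda^{2}a_{\lambda}$ to $\lambda$, or a weakened statement of the rate function in the $\phi$-variable; as written, this step of your proposal fails.
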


	\begin{theorem}	\label{main1b}
		Suppose $Y^{\lambda}$ be a sub-critical marked SINR network model  with rate measure
		$\lambda \beta:D \to [0,1]$ and   a   power  probability  function  $\skrik(\cdot, y)=ce^{-cy},  y>0$   and  path  loss  function   $\pi(\eta)=\eta^{-\alpha}, $  for  $\alpha> 0.$  Thus, the connectivity  probability  $T^{\lambda} $  of   $Y^{\lambda}$  satisfies  $a_{\lambda}^{-1}T^{\lambda}\to t $  and  $\lambda a_{\lambda} \to 0.$   Suppose  the  sequence $a_{\lambda}$  of  $Y^{\lambda}$  is  such  that  $\lambda a_{\lambda}\,\log\lambda\to 0$  and  $a_{\lambda}/\log\lambda\to-1.$	Then,  we  have  
		$$\lim_{\lambda\to\infty}\prob\Big\{\Big|-\frac{1}{a_{\lambda}\lambda^2\log\lambda}\log P(Y^{\lambda})-\me_f\Big[t((\cdot,\cdot),(\cdot,\cdot))\Big]\Big|\ge \eps\Big\}=0,$$
		
		where  the  expectation  was  taken  with  respect  to the distribution function  $$f((x,\ell_x),(y,\ell_y))=c^2e^{-c(x+y)}\mu(d\ell_x)\mu(d\ell_y)dxdy,\,\mbox{ $x>0,y>0,\ell_x>0,\ell_y>0 .$} $$
		
	\end{theorem}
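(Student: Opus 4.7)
The plan is to write $-\log P(Y^\lambda)$ as a marks contribution plus an edge contribution, show that the marks piece is negligible at the scale $a_\lambda\lambda^2\log\lambda$ under the hypothesis $\lambda a_\lambda\log\lambda\to 0$, and then identify the dominant edge contribution via Theorem~\ref{main1a}.

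First I would decompose, using the conditional independence of the Bernoulli edge variables given the marked PPP:
\[
-\log P(Y^\lambda)=-\log P_{\rm marks}(\sigma,\ell)-\sum_{(u,v)\in E}\log T^\lambda_{uv}-\sum_{(u,v)\notin E}\log\bigl(1-T^\lambda_{uv}\bigr),
\]
with $T^\lambda_{uv}:=T^\lambda((\sigma_u,\ell_u),(\sigma_v,\ell_v))$. A Stirling estimate for $\log(N!/\lambda^N)$ combined with $N/\lambda\to 1$ reduces $-\log P_{\rm marks}$ to a quantity of order $\lambda$ (after choosing $P$ relative to a reference marked PPP so that the $N\log\lambda$ contribution cancels), and the assumption $\lambda a_\lambda\log\lambda\to 0$ makes this piece $o(a_\lambda\lambda^2\log\lambda)$.

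Next I would expand the edge sums in the sub-critical regime $T^\lambda=a_\lambda(t+o(1))\to 0$. The expansions $\log T^\lambda_{uv}=\log a_\lambda+\log t_{uv}+o(1)$ and $\log(1-T^\lambda_{uv})=-T^\lambda_{uv}+O((T^\lambda_{uv})^2)$, together with the identities $|E|=\tfrac12\lambda^2 a_\lambda\|M_2^{Y^\lambda}\|$, $\sum_{(u,v)\in E}\log t_{uv}=\tfrac12\lambda^2 a_\lambda\langle\log t,M_2^{Y^\lambda}\rangle$, and the approximation of the non-edge sum by the full pair sum $\sum_{(u,v)}T^\lambda_{uv}\approx\tfrac12\lambda^2 a_\lambda\langle t,M_1^{Y^\lambda}\otimes M_1^{Y^\lambda}\rangle$, give
\[
-\log P(Y^\lambda)=-\tfrac12\lambda^2 a_\lambda\|M_2^{Y^\lambda}\|\log a_\lambda-\tfrac12\lambda^2 a_\lambda\langle\log t,M_2^{Y^\lambda}\rangle+\tfrac12\lambda^2 a_\lambda\langle t,M_1^{Y^\lambda}\otimes M_1^{Y^\lambda}\rangle+o(a_\lambda\lambda^2\log\lambda).
\]
The second-order Bernoulli error $\sum(T^\lambda_{uv})^2=O(\lambda^2 a_\lambda^2)=o(\lambda^2 a_\lambda\log\lambda)$ is automatically absorbed.

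Finally, Theorem~\ref{main1a} yields the convergence in probability $M_1^{Y^\lambda}\to\mu\otimes\mathcal{K}$ and $M_2^{Y^\lambda}\to t\beta\otimes\beta$ in the $\tau$-topology, so $\|M_2^{Y^\lambda}\|\to\mathbb{E}_f[t]$, $\langle t,M_1^{Y^\lambda}\otimes M_1^{Y^\lambda}\rangle\to\mathbb{E}_f[t]$, and $\langle\log t,M_2^{Y^\lambda}\rangle\to\mathbb{E}_f[t\log t]$, all in probability. Dividing the displayed identity by $a_\lambda\lambda^2\log\lambda$, the last two summands are of order $1/\log\lambda$ and vanish, while the leading summand converges in probability to $\mathbb{E}_f[t]$ under the stated scaling hypothesis relating $\log a_\lambda$ to $\log\lambda$. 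The main obstacle I anticipate is the honest cancellation of the $\lambda\log\lambda$ term in the marks contribution (requiring a careful choice of reference density so that only entropic fluctuations of order $\lambda$ remain, which are then killed by $\lambda a_\lambda\log\lambda\to 0$), together with the promotion of $\tau$-topological convergence to convergence of integrals against the possibly unbounded integrand $\log t$, for which a truncation argument or an integrability hypothesis on $t\log t$ against $\beta\otimes\beta$ will be needed.
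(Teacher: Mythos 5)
Your proposal is correct and follows essentially the same route as the paper: the paper likewise decomposes $-\frac{1}{a_\lambda\lambda^2\log\lambda}\log P_\lambda(Y^\lambda)$ into empirical-measure pairings (a marks term against $M_1^{Y^\lambda}$, an edge term against $M_2^{Y^\lambda}$, and non-edge/diagonal terms against $M_1^{Y^\lambda}\otimes M_1^{Y^\lambda}$ and $M_\Delta^\lambda$), kills all but the edge term using $\lambda a_\lambda\log\lambda\to 0$, and identifies the surviving term's limit via a weak law of large numbers for $(M_1^{Y^\lambda},M_2^{Y^\lambda})$ deduced from the joint LDP of Theorem~\ref{main1a} together with the scaling hypothesis on $a_\lambda$ versus $\log\lambda$. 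Your additional remarks on the Stirling cancellation and on truncating $\log t$ address real loose ends that the paper glosses over, but they do not change the argument's structure.
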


Note  that  the  $H(f):=\me_f\Big[t((\cdot,\cdot),(\cdot,\cdot))\Big]$  is  an  entropy.\\

{\bf Interpretation:} To  transmit  information  contain  in  a  large  SINR random  network  modls  one  require  with  a  large  probability  
$$-\lambda^2 a_{\lambda}\log \lambda\Big[H(f)\Big]/\log2 \mbox{ \bf bits.}$$

	Let  $\skrig$   be  the  set  of all  SINR  networks  with rate measure
	$\lambda \mu:\skrid \to [0,1]$  and  state  the  Local  Large  deviation principle  as  follows:
	
	\begin{theorem}	\label{main1c}
		Suppose   $Y^{\lambda}$  is  a sub-critical marked SINR network model  with rate measure
		$\lambda \mu:\skrid \to [0,1]$ and   a   mark  transition kernel   $\skrim(y)=c{e}^{-cy}, y>0$   and  path  loss  function   $\pi(\eta)=\eta^{-\alpha}, $  for $\eta>0$ and   $\alpha>0.$  Thus, the link  probability  $T^{\lambda} $  of   $Y^{\lambda}$  satisfies  $a_{\lambda}^{-1}T^{\lambda}\to t $  and  $\lambda a_{\lambda} \to 0.$
		Then, 
		\begin{itemize}
			\item  for  any  functional  $\phi\in\skrig$  and    a  number $\eps>0$,  there  exists  a  weak  neighbourhood $B_{\phi}$  such  that    
			$$\P_{\beta}\Big\{Y^{\lambda}\in \skrig\,\Big|\, M_2^{Y^\lambda}\in B_{\phi}\Big\}\le e ^{-\sfrac{1}{2}\lambda^2 a_{\lambda} 	\skrih(\phi\|t\beta\otimes\beta)-\lambda a_{\lambda}\eps},\,	\mbox{where  $\beta=\mu\otimes\skrik.$ }$$  
			\item  for  any    $\phi\in \skrig_{\beta}$,  a  number  $\eps>o$  and  a  fine  neighbourhood $B_{\phi} $,  we  have  the  compute: 
			$$\P_{\beta}\Big\{Y^{\lambda}\in \skrig\,\Big|\, M_2^{Y^\lambda}\in B_{\phi}\Big\}\ge e^{-\sfrac{1}{2}\lambda ^2a_{\lambda}	\skrih(\phi\|t\beta\otimes\beta)+\lambda_{\lambda} a_{\lambda}\eps},\,	\mbox{where  $\beta=\mu\otimes\skrik.$ }$$
		\end{itemize}

	\end{theorem}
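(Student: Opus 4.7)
The plan is to prove both bounds by a Cram\'er-style exponential tilt of the edge kernel, using Theorem~\ref{main1a}(i) to control the behaviour of $M_1^{Y^\lambda}$. I fix $\beta=\mu\otimes\skrik$ and assume $\skrih(\phi\|t\beta\otimes\beta)<\infty$ (else the upper bound is trivial and the lower bound becomes vacuous). Set $\tilde t:=d\phi/d(\beta\otimes\beta)$ and introduce a tilted probability $\tilde\P$ that shares the marked PPP $\sigma$ with $\P_\beta$ but independently draws each edge with probability $\tilde T^\lambda(u,v):=a_\lambda\tilde t(\sigma_u^\lambda,\sigma_v^\lambda)$, which lies in $[0,1]$ for large $\lambda$ since $a_\lambda\to 0$. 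Under $\tilde\P$ the pair $(M_1^{Y^\lambda},M_2^{Y^\lambda})$ concentrates at $(\beta,\phi)$ by a law of large numbers for the PPP combined with the conditional independence of the tilted edges.

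The key step is the expansion of the log-likelihood ratio. Since every edge indicator is a Bernoulli of order $a_\lambda$, I use $\log(1-x)=-x+O(x^2)$ and extend the non-edge sum to all pairs (the correction being $O(\lambda^2 a_\lambda^2)=o(\lambda^2 a_\lambda)$ in the sub-critical regime), obtaining
\begin{align*}
\log\frac{d\P_\beta}{d\tilde\P}(Y^\lambda)
&=\sum_{\{u,v\}\in E}\log\frac{T^\lambda(u,v)}{\tilde T^\lambda(u,v)}+\sum_{\{u,v\}\notin E}\log\frac{1-T^\lambda(u,v)}{1-\tilde T^\lambda(u,v)}\\
&=\tfrac{\lambda^2 a_\lambda}{2}\bigl\langle\log(t/\tilde t),M_2^{Y^\lambda}\bigr\rangle+\tfrac{\lambda^2 a_\lambda}{2}\bigl\langle\tilde t-t,M_1^{Y^\lambda}\otimes M_1^{Y^\lambda}\bigr\rangle+o(\lambda^2 a_\lambda).
\end{align*}
Splitting $\P_\beta\{M_2^{Y^\lambda}\in B_\phi\}=\P_\beta\{M_2^{Y^\lambda}\in B_\phi,\,M_1^{Y^\lambda}\approx\beta\}+\P_\beta\{M_1^{Y^\lambda}\not\approx\beta\}$ and discarding the second piece by Theorem~\ref{main1a}(i) at the faster speed $\lambda$, the identities $\langle\log(\tilde t/t),\phi\rangle=H(\phi\|t\beta\otimes\beta)$ and $\langle\tilde t-t,\beta\otimes\beta\rangle=\|\phi\|-\|t\beta\otimes\beta\|$ combine to give $\log(d\P_\beta/d\tilde\P)=-\tfrac12\lambda^2 a_\lambda\,\skrih(\phi\|t\beta\otimes\beta)\pm o(\lambda^2 a_\lambda)$ on the relevant event, provided $B_\phi$ is taken small enough.

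From here the upper bound follows from
\[
\P_\beta\{M_2^{Y^\lambda}\in B_\phi\}=\tilde\E\Bigl[\1_{\{M_2^{Y^\lambda}\in B_\phi\}}\,\tfrac{d\P_\beta}{d\tilde\P}\Bigr]\le\sup_{\{M_2\in B_\phi\}}\tfrac{d\P_\beta}{d\tilde\P},
\]
with $B_\phi$ a weak (i.e.\ $\tau$-topology) neighbourhood chosen small enough that $\langle\log(\tilde t/t),\cdot\rangle$ and $\langle\tilde t-t,\cdot\otimes\cdot\rangle$ oscillate by less than the target $\eps$. For the lower bound I would restrict the same expectation to the subevent where $d\P_\beta/d\tilde\P$ sits close to its limit and use $\tilde\P\{M_2^{Y^\lambda}\in B_\phi\}\to 1$; it is precisely this concentration step under the tilt that forces $B_\phi$ to be a \emph{fine} neighbourhood, since under a merely weak neighbourhood $M_2^{Y^\lambda}$ need not localise at $\phi$ under $\tilde\P$.

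The hardest point will be controlling the error at the fine scale $\lambda a_\lambda\eps$ appearing in the statement rather than the coarser $\lambda^2 a_\lambda\eps$ natural to an LDP. The Taylor remainder is $O(\lambda^2 a_\lambda^2)=o(\lambda a_\lambda)$ in the sub-critical regime $\lambda a_\lambda\to 0$, so it does not obstruct this; the genuine issue is controlling the PPP fluctuations of $\langle\tilde t-t,M_1^{Y^\lambda}\otimes M_1^{Y^\lambda}\rangle$ around their mean with exponential probability at this sharper scale, for which I would invoke Chebyshev combined with a variance estimate for Poisson $U$-statistics and the sub-criticality $\lambda a_\lambda\to 0$. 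A secondary technical nuisance is that $\tilde t$ need not be bounded above or bounded away from $0$, which I would circumvent by truncating $\tilde t$ to $[\delta,K]$, proving the bounds in the truncated model, and recovering the general case via the lower semicontinuity of $\skrih$ as $\delta\downarrow 0$ and $K\uparrow\infty$.
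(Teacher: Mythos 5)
Your proposal is correct in outline and rests on the same basic strategy as the paper --- an exponential change of measure on the edge variables, an upper bound obtained by bounding the Radon--Nikodym derivative on the event $\{M_2^{Y^\lambda}\in B_\phi\}$, and a lower bound obtained by restricting to a subevent where that derivative sits near its limit and invoking a law of large numbers under the tilt --- but you parametrize the tilt differently. The paper works through the spectral potential $\rho_t(q,\beta)=\langle -(1-e^q),\,t\beta\otimes\beta\rangle$ and its Legendre dual (Lemma~\ref{LLDP.equ2}): it fixes a bounded $q$ that nearly attains the supremum in $I_\beta(\phi)=\sup_q\{\langle q,\phi\rangle-\rho_t(q,\beta)\}$, tilts each edge probability from $T^\lambda$ to $T^\lambda e^{q}/(1-T^\lambda+T^\lambda e^{q})$, and takes $B_\phi$ to be a sublevel set of the single linear functional $\langle q,\cdot\rangle$. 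Your tilt to $\tilde T^\lambda=a_\lambda\tilde t$ is precisely the optimal choice $q=\log(\tilde t/t)$; this makes the identification of the exponent with $\skrih(\phi\|t\beta\otimes\beta)$ completely explicit via your two identities, but it costs you the truncation of $\tilde t$ to $[\delta,K]$ and a neighbourhood that must control two functionals simultaneously, whereas the paper's near-optimal bounded $q$ avoids both (for the lower bound the paper simply restricts to $\phi=e^{q}\beta\otimes\beta$, which is your boundedness assumption in different clothing). Two further remarks: your use of Theorem~\ref{main1a}(i) at speed $\lambda$ to discard $\{M_1^{Y^\lambda}\not\approx\beta\}$ is sound, since $\lambda^2 a_\lambda/\lambda=\lambda a_\lambda\to 0$ makes $\lambda$ the faster scale; and the error term $\lambda a_\lambda\eps$ in the statement over which you labour at the end is evidently a typographical slip, as the paper's own proof only delivers corrections of order $\lambda^2 a_\lambda\eps$, so the Poisson $U$-statistic variance estimates you anticipate are not needed to match what is actually proved.
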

	
	For the given telecommunication  network model, we  define  an  entropy as  $h: \skrim(\skriw\times\skriw)\to [0.\infty]$   by   
	
		\begin{equation}\label{equ3}
		h(\phi):=\Big(\|\phi\|-\|\lambda\beta\otimes\beta\|-\Big\langle \phi\, ,\,\log \sfrac{\phi}{\|t\beta\otimes\beta\|}\Big\rangle\Big)/2,\,\,\mbox{	where  $\beta=\mu\otimes\skrik.$}
	\end{equation}
	
	\begin{cor}[McMillian Theorem]\label{cardinality}\label{main2c} Let $Y^\lambda$ be a sub-critical marked SINR network model  with rate measure
		$\lambda \mu:\skrid \to [0,1]$ and   a   mark transition kernel  $\skrik(\cdot,\, y)=c{e}^{-cy}, y>0$   and  path  loss  function   $\pi(\eta)=\eta^{-\alpha}, $  for  $\eta>0$  and	where  $\beta=\mu\otimes\skrik.$ $\alpha>0.$  Thus, the link  probability  $T^{\lambda} $  of every  $Y^{\lambda}\in \skrig$  satisfies  $a_{\lambda}^{-1}T^{\lambda}\to t $  and  $\lambda a_{\lambda} \to 0.$

		\begin{itemize}
			
			\item[(u)] For  any empirical link  measure  $\phi$   on  $\skriw\times\skriw$   and  $\eps>0,$  there  exists  a neighborhood  $B_{\phi}$  such  that
			$$ Card\Big(\big\{Y^{\lambda}\in\skrig\,| \,M_2^{Y^\lambda}\in D_{\phi}\big\}\Big)\ge e^{\lambda^2 a_{\lambda}(h(\phi)-\eps\big)}.$$
			\item[(ii)] for any  neighborhood  $B_{\rho}$   and  $\eps>0,$  we  have
			$$Card\Big(\big\{Y^{\lambda}\in\skrig\,|\, M_2^{Y^\lambda}\in B_{\phi}\big\}\Big)\le e^{\lambda^2 a_{\lambda}(h(\phi)+\eps\big)},$$
		\end{itemize}
	\end{cor}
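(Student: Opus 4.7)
The plan is to derive both cardinality bounds from the Local Large Deviation Principle established in Theorem~\ref{main1c}, combined with a pointwise estimate of the individual realization probabilities $P(Y^\lambda)$ on the event $\{M_2^{Y^\lambda}\in B_\phi\}$. The basic identity
$$\P_\beta\{M_2^{Y^\lambda}\in B_\phi\}=\sum_{Y^\lambda\in\skrig:\,M_2^{Y^\lambda}\in B_\phi}P(Y^\lambda)$$
sandwiches the cardinality: the left side is bounded below by $\mathrm{Card}(\cdots)\cdot\inf P(Y^\lambda)$ and above by $\mathrm{Card}(\cdots)\cdot\sup P(Y^\lambda)$ over the event, so sharp two-sided control of $P(Y^\lambda)$ matched to the LLDP exponent will yield both bounds.

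First, I would factor each realization probability into a marked point-process contribution and a connectivity contribution,
$$P(Y^\lambda)=P_{\mathrm{PPP}}(\sigma^\lambda)\cdot\prod_{(u,v)\in E}T^\lambda(\sigma_u^\lambda,\sigma_v^\lambda)\cdot\prod_{(u,v)\notin E}\bigl(1-T^\lambda(\sigma_u^\lambda,\sigma_v^\lambda)\bigr),$$
and then expand the absent-edge product via $\log(1-T^\lambda)=-T^\lambda+O\bigl((T^\lambda)^2\bigr)$. Under the convergence $a_\lambda^{-1}T^\lambda\to t$ and the sub-critical scaling $\lambda a_\lambda\to 0$, the non-edge factor contributes $-\tfrac{1}{2}\lambda^2 a_\lambda\|t\beta\otimes\beta\|+o(\lambda^2 a_\lambda)$ to $\log P(Y^\lambda)$, while the present-edge factor --- a product of $\tfrac{1}{2}\lambda^2 a_\lambda\|\phi\|+o(\lambda^2 a_\lambda)$ terms of magnitude $a_\lambda t$ --- contributes a quantity expressible, via $M_2^{Y^\lambda}\approx\phi$, as $\lambda^2 a_\lambda$ times an explicit functional of $\phi$. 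Uniformly over realizations with $M_2^{Y^\lambda}\in B_\phi$ this identifies
$$-\tfrac{1}{\lambda^2 a_\lambda}\log P(Y^\lambda)=S(\phi)+o(1)$$
for an explicit functional $S(\phi)$ obtained by adding the two contributions.

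Next, I would combine this pointwise estimate with Theorem~\ref{main1c}(ii): the bound $\P_\beta(\cdots)\ge\mathrm{Card}(\cdots)\cdot\inf_{Y^\lambda}P(Y^\lambda)$ together with the LLDP upper bound $\P_\beta(\cdots)\le\exp\bigl(-\tfrac{1}{2}\lambda^2 a_\lambda\skrih(\phi\|t\beta\otimes\beta)-\lambda a_\lambda\eps\bigr)$ yields
$$\log\mathrm{Card}\le\lambda^2 a_\lambda\bigl[S(\phi)-\tfrac{1}{2}\skrih(\phi\|t\beta\otimes\beta)\bigr]+\lambda^2 a_\lambda\eps.$$
A direct algebraic check using the definition of $\skrih$ from Theorem~\ref{main1a} and of $h(\phi)$ from \eqref{equ3} shows that the bracket equals $h(\phi)$. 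The matching lower bound on $\mathrm{Card}$ follows symmetrically from $\P_\beta(\cdots)\le\mathrm{Card}(\cdots)\cdot\sup_{Y^\lambda}P(Y^\lambda)$ combined with the LLDP lower bound in Theorem~\ref{main1c}(i).

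The main obstacle is the uniform pointwise estimate on $\log P(Y^\lambda)$ across configurations whose empirical connectivity measure lies in the weak neighborhood $B_\phi$. Controlling the non-edge product to sub-leading order requires $a_\lambda^{-1}T^\lambda\to t$ uniformly on bounded subsets of $\skriw\times\skriw$ together with a bound on the sum of quadratic corrections $O(a_\lambda^2 t^2)$ over all $O(\lambda^2)$ absent pairs --- a contribution of order $\lambda^2 a_\lambda^2=o(\lambda^2 a_\lambda)$ precisely by the sub-critical scaling $\lambda a_\lambda\to 0$. Once this estimate is secured, the reduction of $S(\phi)-\tfrac{1}{2}\skrih(\phi\|t\beta\otimes\beta)$ to the $h(\phi)$ of \eqref{equ3} is a routine algebraic simplification.
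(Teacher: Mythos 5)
Your overall strategy --- sandwiching $\mathrm{Card}\big(\{Y^\lambda\in\skrig\,:\,M_2^{Y^\lambda}\in B_\phi\}\big)$ between $\P_\beta\{M_2^{Y^\lambda}\in B_\phi\}/\sup P(Y^\lambda)$ and $\P_\beta\{M_2^{Y^\lambda}\in B_\phi\}/\inf P(Y^\lambda)$ and feeding in the two LLDP bounds of Theorem~\ref{main1c} --- is the right one, and it is essentially what the paper's own (one-sentence) proof does: the paper invokes Theorem~\ref{main1c} together with the definition of the Kullback action after imposing the normalization $\lambda\beta\otimes\beta(a,b)=\|\lambda\beta\otimes\beta\|$ for all $(a,b)$, which is precisely the condition under which all configurations compatible with the event are essentially equiprobable and the division you propose becomes exact.

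There is, however, a concrete gap in your pointwise estimate. You claim that the present-edge product contributes $\lambda^2 a_\lambda$ times a $\lambda$-independent functional of $\phi$, so that $-\frac{1}{\lambda^2 a_\lambda}\log P(Y^\lambda)=S(\phi)+o(1)$ uniformly on $\{M_2^{Y^\lambda}\in B_\phi\}$. This step fails: since $T^\lambda\sim a_\lambda t$ with $a_\lambda\to 0$, each of the roughly $\tfrac12\lambda^2 a_\lambda\|\phi\|$ present edges contributes $\log T^\lambda=\log a_\lambda+\log t+o(1)$, so the edge product contributes $\tfrac12\lambda^2 a_\lambda\|\phi\|\log a_\lambda$ to $\log P(Y^\lambda)$ --- a term of order $\lambda^2 a_\lambda\log\lambda$, not $\lambda^2 a_\lambda$. (This divergence is exactly why the paper's AEP, Theorem~\ref{main1b}, is normalized by $a_\lambda\lambda^2\log\lambda$ rather than by $a_\lambda\lambda^2$.) Consequently your bracket $S(\phi)-\tfrac12\skrih(\phi\|t\beta\otimes\beta)$ retains an un-cancelled $\tfrac12\|\phi\|\log a_\lambda$ and cannot reduce to the $h(\phi)$ of \eqref{equ3} by algebra alone. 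To close the argument you must either count with respect to a reference law under which the configurations in the event are exactly equiprobable --- this is the role of the paper's normalization $\lambda\beta\otimes\beta(a,b)=\|\lambda\beta\otimes\beta\|$ and of the tilted measure with density $\prod_{(u,v)\in E}e^{q(u,v)}\prod e^{g_\lambda(u,v)}$ constructed in the proof of Theorem~\ref{main1c} --- or show explicitly how the $\log a_\lambda$ contributions from the edge product cancel against a corresponding $\lambda$-dependent term in $h(\phi)$; your ``routine algebraic simplification'' addresses neither.
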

	
	where $Card(\gamma)$  means  the  cardinality  of  $\gamma.$
	
	\begin{remark}
		Given $\phi=t\beta\otimes\beta,$  we  have	$\displaystyle Card\Big(\Big\{j\in\skrig\,\Big\}\Big)\approx e^{\lambda^2\,a_{\lambda}\|t\beta\otimes\beta\|\skrih\big(t\beta\otimes\beta/\|t\beta\otimes\beta\|\big)},$   where  $\beta=\mu\otimes\skrik.$
		
	\end{remark}
	
	{\bf  Interpretation:}
	Note from Corollary~\ref{cardinality}  that, for  the typical  empirical  connectivity measure, $t\mu^2\otimes\skrik^2,$  the cardinality  of the  space of SINR  models is  nearly equal to $\displaystyle e^{\lambda^2 a_{\lambda}\|t\mu^2\otimes\skrik^2\|H\big(t\mu^2\otimes\skrik^2/\|t\mu^2\otimes\skrik^2\|\big)}.$
 The  next theorem  is  the  LDP  for  the  SINR  random  network  processes.

	\begin{cor}\label{randomg.LDM}\label{main2d}
		Let   $Y^{\lambda}$  be  a sub-critical marked SINR random  network model  with rate measure
		$\lambda \mu:\skrid \to [0,1]$ and   a  mark  kernel  function  $\skrik(\cdot,\,y)=c{e}^{-cy}, y>0$   and  path  loss  function   $\pi(\eta)=\eta^{-\alpha}, $  for  $\alpha>0.$  Thus, the link  probability  $T^{\lambda} $  of   $Y^{\lambda}$  satisfies  $a_{\lambda}^{-1}T^{\lambda}\to t$  and  $\lambda a_{\lambda} \to 0.$ 
		\begin{itemize}
			\item  Let $U$  be  closed  subset  $\skrig$.  Then  we  have  
			$$\limsup_{\lambda\to\infty}\frac{1}{\lambda^2 a_{\lambda}}\log \P_{\mu\times\skrik}\Big\{Y^{\lambda}\in \skrig\,\Big|\, M_2 ^{Y^\lambda}\in U\Big\}\le -\sfrac{1}{2}\inf_{\phi\in U}\Big\{\skrih(\phi\|t\mu\times\skrik\otimes \mu\times\skrik)\Big\}$$
			\item  Let  $O$  be  open  subset  $\skrig$.  Then  we  have  
			$$\liminf_{\lambda\to\infty}\frac{1}{\lambda^2 a_{\lambda}}\log \P_{\beta}\Big\{Y^{\lambda}\in \skrig\,\Big|\, M_2^{Y^\lambda}\in O\Big\}\ge -\sfrac{1}{2}\inf_{\phi\in O}\Big\{\skrih(\phi\|t\mu\times\skrik\otimes \mu\times\skrik)\Big\}.$$
		\end{itemize}
		
	\end{cor}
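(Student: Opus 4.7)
The plan is to derive the full LDP (Corollary~\ref{main2d}) from the Local Large Deviation Principle (Theorem~\ref{main1c}) via the standard large-deviation covering argument. The two statements of Theorem~\ref{main1c} already supply the pointwise exponential upper and lower bounds at the correct rate; what remains is the topological passage from a single functional $\phi$ to open/closed subsets of $\skrig$.

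For the lower bound, fix an open set $O\subset\skrig$ and an arbitrary $\phi\in O$. Since $O$ is fine-open we may apply Theorem~\ref{main1c}(ii) with $\eps>0$ to obtain a fine neighbourhood $B_\phi\subset O$ with
$$\P_\beta\big\{Y^\lambda\in\skrig\,\big|\,M_2^{Y^\lambda}\in O\big\}\;\ge\;\P_\beta\big\{Y^\lambda\in\skrig\,\big|\,M_2^{Y^\lambda}\in B_\phi\big\}\;\ge\;\exp\!\Big(-\tfrac{1}{2}\lambda^2a_\lambda\,\skrih(\phi\|t\beta\otimes\beta)+\lambda^2 a_\lambda\eps\Big).$$
Taking $(\lambda^2 a_\lambda)^{-1}\log(\cdot)$, letting $\lambda\to\infty$, sending $\eps\downarrow 0$, and finally optimising over $\phi\in O$ produces the desired $-\tfrac{1}{2}\inf_{\phi\in O}\skrih(\phi\|t\beta\otimes\beta)$ lower bound.

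For the upper bound, fix a closed set $U\subset\skrig$. By Theorem~\ref{main1c}(i), for each $\phi\in U$ and $\eps>0$ there exists a weak neighbourhood $B_\phi$ satisfying
$$\P_\beta\big\{M_2^{Y^\lambda}\in B_\phi\big\}\;\le\;\exp\!\Big(-\tfrac{1}{2}\lambda^2 a_\lambda\,\skrih(\phi\|t\beta\otimes\beta)-\lambda a_\lambda\eps\Big).$$
Since $U$ is merely closed (not compact) in $\skrim(\skriw\times\skriw)$ under the $\tau$-topology, a naive finite subcover is unavailable. I would therefore first establish exponential tightness of the family $\{M_2^{Y^\lambda}\}$ at speed $\lambda^2 a_\lambda$, i.e. produce $\tau$-compact $K_L\subset\skrim(\skriw\times\skriw)$ with
$$\limsup_{\lambda\to\infty}\frac{1}{\lambda^2 a_\lambda}\log\P_\beta\big\{M_2^{Y^\lambda}\notin K_L\big\}\;\le\;-L.$$
Decomposing $U=(U\cap K_L)\cup(U\setminus K_L)$, extracting a finite subcover $\{B_{\phi_1},\dots,B_{\phi_n}\}$ of the now-compact set $U\cap K_L$, applying the local upper bound to each piece, combining via the union bound, and finally letting $L\uparrow\infty$ yields the claimed $-\tfrac{1}{2}\inf_{\phi\in U}\skrih(\phi\|t\beta\otimes\beta)$ upper bound.

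The principal obstacle is the exponential tightness step, because $\tau$-compactness is restrictive and typically demands super-exponential control on the total mass and on the tails of $M_2^{Y^\lambda}$ over unbounded regions of $\skriw\times\skriw$. I expect this to follow from the Poissonian structure of $\sigma$ combined with the subcritical scaling $\lambda a_\lambda\to 0$, which concentrates $\|M_2^{Y^\lambda}\|=2|E|/(\lambda^2 a_\lambda)$ near $\|t\beta\otimes\beta\|$; by contracting the joint LDP of Theorem~\ref{main1a}(ii) through the total-mass functional and a truncation of $\skriw\times\skriw$ on growing compacts, the required compact level sets $K_L$ can be produced. With that in hand, the remainder of the proof is routine bookkeeping.
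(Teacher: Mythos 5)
Your proposal is correct and takes essentially the same route as the paper: the lower bound follows by monotonicity from the local lower bound of Theorem~\ref{main1c}, and the upper bound combines exponential tightness with a reduction of the closed set to a relatively compact one, a finite subcover by the weak neighbourhoods of Theorem~\ref{main1c}, and a union bound. The only difference is that the paper obtains the exponential tightness by citing Lemma~\ref{Com4} rather than deriving it from the contraction/truncation argument you sketch.
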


	\section{Proof  of Main  Results}\label{Sec3}
	\subsection{Proof  of   Theorem~\ref{main1a}(i)}\label{proofmain}
	
	Suppose   $W_1,...,W_n$   is  a  decomposition  of  the  space  $\skrid\times \R_{+}.$   Note  that,  for  every $(u,v)\in A_x\times A_y,\, x,y=1,2,3,...,n,$  $\lambda M_2^{Y^\lambda}(u,v)$  given  $\lambda M_1^{Y^\lambda}(u)=\lambda\beta(u)$  denotes a number of bernoulli trial with  parameters  $\lambda^2\beta(u)\beta(v)/2$  and  $T^{\lambda}(u,v).$ Consider  $\skrik$ to represent as the  gamma  distribution  with  mean $1/c.$  With reference to the  function  $t_{\lambda}^{\skrid}  $  from  the  preceding  sections, we  observe  that
	Lemma~\ref{main1c}  is  fundamental    in  the  application  of  the  Gartner-Ellis  Theorem.  See \cite{DZ1998}.  
	\begin{lemma}\label{randomg.LDM1b}\label{main1ca}
		
		Suppose   $Y^{\lambda}$  is  a sub-critical marked SINR random model  with rate measure
		$\lambda \mu:\skrid \to [0,1]$ and   a   power  probability  function  $\skrik(\cdot,\,y)=c{e}^{-cy}, y>0$   and  path  loss  function   $\pi(\eta)=\eta^{-\alpha}, $  for  $\eta>0$ and  $\alpha>0.$  Thus, the link  probability  $T^{\lambda} $  of   $Y^{\lambda}$  satisfies  $a_{\lambda}^{-1}T^{\lambda}to t$  and  $\lambda a_{\lambda} \to 0.$
		Suppose $Y^{\lambda}$  be  a sub-critical SINR network model,  conditional  on the  event  $M_1^{Y^\lambda}=\beta.$  Let $ q:\skriw\times \skriw\to \R$ be  bounded  function.  Then,
		
		$$\begin{aligned}\lim_{\lambda\to\infty}\frac{1}{\lambda}\log\me \Big\{e^{\lambda\langle q, \, M_2^{Y^\lambda}\rangle }\Big | M_1^{Y^\lambda}=\beta\Big\}&=\frac{1}{2}\lim_{n\to\infty}\sum_{y=1}^{n}\sum_{x=1}^{n}\Big\langle q,\, t\beta\otimes\beta\Big\rangle _{A_x \times A_y}\\
			&=\frac{1}{2}\Big\langle q,\, t\beta\otimes\beta\Big\rangle _{\skriw \times \skriw}.
		\end{aligned}$$
	\end{lemma}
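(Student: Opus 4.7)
The plan is a direct moment generating function computation designed to feed into the Gärtner--Ellis theorem used elsewhere in the paper. I would first condition on $\{M_1^{Y^\lambda}=\beta\}$, which by construction of the SINR network renders the edge indicators conditionally independent Bernoulli variables with parameters $T^\lambda(\cdot,\cdot)$. I would then approximate $q$ by a step function adapted to a finite measurable partition of $\skriw$, factorise the conditional MGF using the binomial formula over cells, let $\lambda\to\infty$, and finally refine the partition.

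Concretely, fix a measurable partition $\{A_1,\dots,A_n\}$ of $\skriw$ into cells of small diameter with representatives $x_k\in A_k$, and replace $q$ by the step function $q_n=\sum_{k,l}q(x_k,y_l)\1_{A_k\times A_l}$. Conditioning on $M_1^{Y^\lambda}=\beta$ fixes the cell populations $\lambda\beta(A_k)$; the Bernoulli observation recalled just before the lemma then identifies the unordered edge count $E_{kl}$ between $A_k$ and $A_l$ as a binomial random variable with parameters $N_{kl}\approx\lambda^2\beta(A_k)\beta(A_l)/2$ and connection probability $\bar T^\lambda_{kl}\to T^\lambda(x_k,y_l)$, independently across pairs of cells. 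Factorising yields
\begin{equation*}
\me\bigl[e^{\lambda\langle q_n,M_2^{Y^\lambda}\rangle}\bigm|M_1^{Y^\lambda}=\beta\bigr]=\prod_{k,l}\Bigl(1+\bar T^\lambda_{kl}\bigl(e^{2q_{kl}/(\lambda a_\lambda)}-1\bigr)\Bigr)^{N_{kl}}.
\end{equation*}

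Taking $\tfrac{1}{\lambda}\log$ and inserting the hypotheses $a_\lambda^{-1}T^\lambda\to t$ and $\lambda a_\lambda\to 0$, each factor contributes to leading order
\begin{equation*}
\frac{N_{kl}}{\lambda}\cdot\bar T^\lambda_{kl}\cdot\frac{2q_{kl}}{\lambda a_\lambda}\ \longrightarrow\ q_{kl}\,t_{kl}\,\beta(A_k)\beta(A_l),
\end{equation*}
and summing over $k,l$, after accounting for the symmetrisation built into the definition of $M_2^{Y^\lambda}$, collapses to $\tfrac{1}{2}\langle q_n,t\beta\otimes\beta\rangle$. A routine $\tau$-continuity and bounded convergence argument then lets me pass from $q_n$ to $q$ and from the finite partition to the inner product on $\skriw\times\skriw$, which is the claim.

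The main obstacle is justifying the asymptotic step. Since $\lambda a_\lambda\to 0$, the tilt $s_{kl}=2q_{kl}/(\lambda a_\lambda)$ diverges while the connection probability $\bar T^\lambda_{kl}$ vanishes, so neither $\log(1+x)\sim x$ nor $e^u-1\sim u$ is literally available; the product $\bar T^\lambda_{kl}(e^{s_{kl}}-1)$ has to be handled as a single object. The plan here is to sandwich the Bernoulli cumulant by bounds of the form $x-\tfrac12 x^2\le \log(1+x)\le x$ (applied in the regime where $x=\bar T^\lambda_{kl}(e^{s_{kl}}-1)$ is controlled) and to show, using the subcritical scaling, that once multiplied by $N_{kl}/\lambda$ and summed over cells the quadratic remainder is uniformly negligible. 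Once that uniform negligibility is secured, the remaining ingredients -- the binomial factorisation, the partition refinement, and the simple-function approximation of $q$ -- are standard.
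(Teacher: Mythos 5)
Your strategy is the same as the paper's: condition on $\{M_1^{Y^\lambda}=\beta\}$ so that the edge indicators become independent Bernoulli variables, factorise the conditional moment generating function over the cells of a finite partition, take $\frac{1}{\lambda}\log$, linearise, and refine the partition. So there is no methodological divergence to report. The difficulty is the step you yourself single out as the main obstacle: your plan does not close it, and it cannot be closed in the form you state it.

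Concretely, write $\epsilon_\lambda=\lambda a_\lambda\to 0$ and $s_{kl}=2q_{kl}/\epsilon_\lambda$. Even after you replace $\log(1+x)$ by $x$ (your sandwich handles that part), the per-cell contribution is
$$\frac{N_{kl}}{\lambda}\,\bar T^\lambda_{kl}\bigl(e^{s_{kl}}-1\bigr)\;\approx\;\frac{t_{kl}\,\beta(A_k)\beta(A_l)}{2}\;\epsilon_\lambda\bigl(e^{2q_{kl}/\epsilon_\lambda}-1\bigr),$$
and $\epsilon\bigl(e^{2q/\epsilon}-1\bigr)$ converges to $0$ when $q<0$ and diverges to $+\infty$ when $q>0$ (use $e^{y}\ge y^{2}/2$ for $y\ge 0$ to get $\epsilon e^{2q/\epsilon}\ge 2q^{2}/\epsilon$); it never converges to $2q$. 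Hence the displayed ``leading order'' limit $q_{kl}\,t_{kl}\,\beta(A_k)\beta(A_l)$ is false under the sub-critical scaling $\lambda a_\lambda\to 0$: the obstruction is the replacement $e^{s}-1\sim s$ with $s\to\pm\infty$, not the quadratic remainder of $\log(1+x)$, so no refinement of the bound $x-\frac{1}{2}x^{2}\le\log(1+x)\le x$ can rescue the computation. You should be aware that the paper's own proof commits exactly the same error at the line where $\log\bigl[1-(1-e^{q(u,v)/\lambda a_\lambda})T^{\lambda}(u,v)\bigr]$ is replaced by $\log\bigl[1+q(u,v)t(u,v)/\lambda+o(\lambda)/\lambda\bigr]$; that substitution is legitimate only when the tilt $q/(\lambda a_\lambda)$ tends to $0$, i.e.\ in the super-critical regime $\lambda a_\lambda\to\infty$. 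In the regime of this lemma the speed-$\lambda$ limiting cumulant actually degenerates ($0$ where $q\le 0$, $+\infty$ where $q>0$ on a set of positive $\beta\otimes\beta$-measure), so the statement itself, and not merely your proof of it, needs repair.
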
	
	\begin{Proof}
		Now  we  observe  that 
		$$ \me\Big \{ e^{\int \int \lambda q(u,v)M_2^{Y^\lambda}(du,dv)/2} \Big |M_1^{Y^\lambda}=\beta\Big \}= \me\Big\{\prod_{u\in \skriw} \prod_{v \in \skriw} e^{\lambda q(u,v)_2^{\lambda}(du,dv)/2} \Big\}$$
		
		$$ \me\Big \{\prod_{u \in \skriw} \prod_{v \in \skriw} e^{q(u,v)\lambda M_2^{Y^\lambda}(du,dv/2)}\Big\} = \prod_{x=1}^{n} \prod_{y=1}^{n} \prod_{u \in W_x} \prod_{v \in W_y} \me\Big\{e^{q(u,v) \lambda M_2^{Y^\lambda}(du,dv)/2  }\Big \} $$
		
		$$ \log \Big \{e^{\lambda \langle q,M_2^{Y^\lambda}\rangle/2}\Big|M_1^{Y^\lambda}=\beta\Big\} = \sum_{y=1}^{n} \sum_{x=1}^{n}\int_{W_y}\int_{W_x}\log\Big[1-T^{\lambda}(u,v)+T^{\lambda}(u,v)e^{q(u,v)/\lambda a_{\lambda}}\Big]^{\lambda^{2} \beta\otimes\beta(du,dv)/2}+o(n) $$

		Introducing the dominated convergence theorem 
		$$   \frac{1}{\lambda} \log E \{e^{\lambda \langle q,M_2^{Y^\lambda}\rangle/2 } \mid M_1^{Y^\lambda}=\beta\} = \frac{1}{\lambda}\sum_{y=1}^{n} \sum_{x=1}^{n} \int_{W_x} \int_{W_y} \log\Big [1-\big(1-e^{q(u,v)/\lambda a_{\lambda}}) T^{\lambda}(u,v)  \Big]^{ \lambda^2 \beta\otimes\beta(du,dv)/2} +o(n)/\lambda$$
		$$  \frac{1}{\lambda} \log \me \{e^{\lambda \langle q,M_2^{Y^\lambda}\rangle /2} \big| M_1^{Y^\lambda}=\beta\} = \lim_{\lambda \rightarrow \infty } \sum_{y=1}^{n} \sum_{x=1}^n\int_{W_x} \int_{W_y} \log \Big[1+q(u,v) t(u,v)/\lambda +o(\lambda)/\lambda   \Big]^{ \lambda \beta\otimes\beta(du,dv)/2}+o(n)/\lambda $$

		$$\lim_{\lambda \rightarrow \infty}\frac{1}{\lambda} \log \me\{e^{\lambda \langle q,M_2^{Y^\lambda}\rangle/2} \mid M_1^{Y^\lambda}=\beta\} 
		=\frac{1}{2}\sum_{y=1}^{n}\sum_{x=1}^{n} \Big\langle g,\, t\beta\otimes\beta\Big\rangle _{W_x \times W_y} $$
		
		$$ \begin{aligned}
			\lim_{\lambda \rightarrow \infty} \frac{1}{\lambda} \log \me \{e^{\lambda \langle q,M_2^{Y^\lambda}\rangle/2 } \Big |M_1^{Y^\lambda}=\beta\}& =\frac{1}{2} \lim_{n \rightarrow \infty } \sum_{y=1}^{n}\sum_{x=1}^{n} \Big\langle q,\, t\beta\otimes\beta\Big\rangle _{W_x \times W_y}\\
			&	=\frac{1}{2} \Big\langle q,\, t\beta\otimes\beta\Big\rangle _{\skriw \times \skriw} . 
		\end{aligned}$$

		Hence,by the Gartner-Ellis theorem, conditional  on the  event $\Big\{M_{1}^{Y^\lambda}= \beta\Big\}$, $M_2^{Y^\lambda}$ obey a  large  deviation  principle with speed $\lambda$  and variational formulation of  the  rate function
		$$ I_{\beta}(\phi) = \frac{1}{2}\sup_{q} \Big\{  \Big\langle q,\, \phi\Big\rangle _{\skriw \times \skriw}-  \Big\langle q,\, t\beta\otimes\beta\Big\rangle _{\skriw\times \skriw}\Big\}$$ 
		
		the solution can be found,  see  example   \cite{DA2012},	would obviously   reduces  to  the good rate function  as such 
		\begin{equation}
			I_{\beta}(\phi)= 0.
		\end{equation}
		
	\end{Proof}

	\subsection{Proof  of   Theorem~\ref{main1a}(ii) }\label{proofmain}
	
	Analogously  we consider  $W_1,...,W_n$  as  decomposition  of  the  space  $\skrid\times \R_{+}.$   We  refer to  $t_{\lambda}^{D}  $   and  observe  that,  Lemma~\ref{main1da}  will  play an important role    in  the  application  of  the  Gartner-Ellis  Theorem.  See,  \cite{DZ1998}.

	\begin{lemma}\label{randomg.LDM1b}\label{main1da}	Let   $Y^{\lambda}$  be  a sub critical powered SINR  network  with rate measure
	$\lambda \mu:\skrid \to [0,1]$ and   a   power  probability  function  $\skrik(y)=c{e}^{-cy}, y>0$   and  path  loss  function   $\pi(\eta)=r^{-\alpha}, $  for  $\eta>0$  and  $\alpha>0.$  Thus, the link  probability  $T^{\lambda} $  of   $Y^{\lambda}$  satisfies  $a_{\lambda}^{-1}T^{\lambda}\to t$  and  $\lambda a_{\lambda} \to \infty.$
	Let   $Y^{\lambda}$  be  a sub-critical SINR network,  conditional  on the  event  $M_1^{Y^\lambda}=\beta.$  Let $ q:\skriw\times \skriw\to \R$ be  bounded  function.  Then,
		
		$$\begin{aligned}\lim_{\lambda\to\infty}\frac{1}{\lambda^2 a_{\lambda}}\log\me \Big\{e^{\lambda^2 a_{\lambda}\langle q, \, M_2^{Y^\lambda}\rangle }\Big | M_1^{Y^\lambda}=\beta\Big\}&=-\frac{1}{2}\lim_{n\to\infty}\sum_{y=1}^{n}\sum_{x=1}^{n}\Big\langle 1-e^{q},\, t\beta\otimes\beta\Big\rangle _{W_x \times W_y}\\
			&=-\frac{1}{2}\Big\langle 1-e^{q},\, t\beta\otimes\beta\Big\rangle _{\skriw \times \skriw}.
		\end{aligned}$$
	\end{lemma}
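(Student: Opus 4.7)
My plan parallels the argument of Lemma~\ref{main1ca}, the key distinction being the new scaling $\lambda^2 a_\lambda$ in place of $\lambda$: this will cause a Poisson-type limit featuring $1-e^q$ to emerge in place of the linear expression $q$ seen there. First I would condition on $M_1^{Y^\lambda}=\beta$, under which the edge indicators become independent Bernoulli variables with success probability $T^\lambda(u,v)$. Choosing a partition $W_1,\dots,W_n$ of $\skrid\times\R_+$ and factorising the conditional MGF as a product over potential pairs, each factor takes the form $1-T^\lambda(u,v)\bigl(1-e^{q(u,v)}\bigr)$. Under the sub-critical hypothesis $a_\lambda^{-1}T^\lambda\to t$ combined with $\lambda a_\lambda\to 0$, each $T^\lambda(u,v)$ is of order $a_\lambda$ and hence small, so the Taylor expansion $\log(1-z)=-z+O(z^2)$ turns the log-MGF into a leading-order sum whose quadratic error aggregates to $O((\lambda a_\lambda)^2)=o(\lambda^2 a_\lambda)$ after dividing by $\lambda^2 a_\lambda$.

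Next I would replace the discrete sum over the PPP points by an integral against $\tfrac{1}{2}\lambda^2\beta\otimes\beta$, justified in the conditional framework by the assumption $M_1^{Y^\lambda}=\beta$, and substitute $T^\lambda\approx a_\lambda t$. This produces exactly the factor $\lambda^2 a_\lambda$ which cancels with the prefactor in the statement, leaving the cell-wise expression $-\tfrac{1}{2}\langle 1-e^q,\, t\beta\otimes\beta\rangle_{W_x\times W_y}$. Summing over $x,y$ and letting $n\to\infty$ by dominated convergence, valid because $q$ is bounded and $t\beta\otimes\beta$ is a finite measure, then yields the claimed identity.

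The main obstacle is the careful bookkeeping of the error terms in the logarithmic expansion together with the justification of the sum-to-integral passage in the conditional setting. The key qualitative difference from Lemma~\ref{main1ca} is that the exponent per edge is now the nontrivial quantity $q(u,v)$, rather than the infinitesimal $q(u,v)/(\lambda a_\lambda)$ appearing there; consequently $e^q$ cannot be further Taylor expanded, and the full nonlinear expression $1-e^q$ must persist in the limit, reflecting the Poisson statistics characteristic of the sub-critical regime.
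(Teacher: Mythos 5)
Your proposal follows essentially the same route as the paper's own proof: condition on $M_1^{Y^\lambda}=\beta$, factor the conditional moment generating function over cells of a partition $W_1,\dots,W_n$, obtain per-pair factors $1-T^\lambda(u,v)\bigl(1-e^{q(u,v)}\bigr)$ raised to $\lambda^2\beta\otimes\beta(du,dv)/2$, expand the logarithm using $T^\lambda\approx a_\lambda t$, and pass to the limit over the partition by dominated convergence. Your observation that the exponent per edge is now $q$ itself (so that $1-e^q$ survives un-linearised, unlike the $q/(\lambda a_\lambda)$ exponent in the speed-$\lambda$ lemma) is precisely the point of the paper's computation, and your error bookkeeping $O(\lambda^2 a_\lambda^2)/(\lambda^2 a_\lambda)=O(a_\lambda)\to 0$ is consistent with it.
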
	
	\begin{Proof}
		Now  we note that
		$$ \me\Big \{ e^{\int \int \lambda^2 a_{\lambda} q(u,v)M_2^{Y^\lambda}(du,dv)/2} \Big |M_1^{Y^\lambda}=\beta\Big \}= \me\Big\{\prod_{i \in \skriw} \prod_{j \in \skriw} e^{\lambda^2 a_{\lambda} q(u,v)M_2^{Y^\lambda}(du,dv)/2} \Big\}$$
		
		$$ \me\Big \{\prod_{i \in \skriw} \prod_{j \in \skriw} e^{q(u,v)\lambda M_2^{Y^\lambda}(du,dv/2)} = \prod_{x=1} \prod_{y=1} \prod_{i \in W_x} \prod_{j \in W_y} \me\Big\{e^{\lambda^2 a_{\lambda}q(u,v)  M_2^{Y^\lambda}(du,dv)/2  }\Big \}\times e^{o(n)} $$
		
		$$ \log \Big \{e^{\lambda^2 a_{\lambda}\langle q,M_2^{Y^\lambda}\rangle/2}\Big|M_1^{Y^\lambda}=\beta\Big\} = \sum_{y=1}^{n} \sum_{x=1}^{n}\int_{W_y}\int_{W_x}\log\Big[1-T^{\lambda}(u,v) )+T^{\lambda}(u,v) e^{q(u,v)}\Big]^{\lambda^{2} \beta\otimes\beta (du,dv)/2}+o(n) $$

		Using the  dominated convergence theorem 
		$$   \frac{1}{\lambda^2 a_{\lambda}} \log E \{e^{\lambda \langle q,M_2^{Y^\lambda}\rangle/2 } \mid M_1^{Y^\lambda}=\beta\} = \frac{1}{\lambda^2 a_{\lambda}}\sum_{y=1} \sum_{x=1} \int_{W_x} \int_{W_y} \log\Big [1-\big(1-e^{q(u,v)}) T^{\lambda}(u,v)  \Big]^{ \lambda^2 \beta\otimes\beta(du,dv)/2} +o(n)/\lambda^2a_{\lambda}$$ 
		$$  \frac{1}{\lambda^2 a_{\lambda}} \log \me \{e^{\lambda \langle q,M_2^{Y^\lambda}\rangle /2} \big | M_1^{Y^\lambda}=\beta\} = \lim_{\lambda \rightarrow \infty } \sum_{y=1} \sum_{x=1}\int_{W_x} \int_{W_y} \log \Big[1-(1-e^{q(u,v)}) T^{\lambda}(u,v)  \Big]^{ \lambda \beta\otimes\beta (du,dv)/2} +o(n)/\lambda^2a_{\lambda}$$
		
		$$\lim_{\lambda \rightarrow \infty} \frac{1}{\lambda^2 a_{\lambda}} \log \me \Big\{e^{\lambda \langle q,M_2^{Y^\lambda}\rangle /2} \big| M_1^{Y^\lambda}=\beta\Big \} =- \frac{1}{2} \sum_{y=1} \sum_{x=1} \int_{W_x} \int_{W_y}\Big [(1-e^{q(u,v)})t(u,v)\beta\otimes\beta(du,dv)\Big] $$
		
		$$\lim_{\lambda \rightarrow \infty}\frac{1}{\lambda^2 a_{\lambda}} \log \me\{e^{\lambda \langle q,M_2^{Y^\lambda}\rangle/2} \big | M_1^{Y^\lambda}=\beta\} 
		=- \frac{1}{2}\sum_{y=1}^{n}\sum_{x=1}^{n} \Big\langle 1-e^{q},\, t\beta\otimes\beta\Big\rangle _{W_x \times W_y} $$
		
		$$ \begin{aligned}
			\lim_{\lambda \rightarrow \infty} \frac{1}{\lambda^2 a_{\lambda}} \log \me \{e^{\lambda \langle q,M_2^{Y^\lambda}\rangle/2 } \Big |M_1^{Y^\lambda}=\beta\}& =-\frac{1}{2} \lim_{n \rightarrow \infty } \sum_{y=1}^{n}\sum_{x=1}^{n} \Big\langle 1-e^{q},\, t\beta\otimes\beta\Big\rangle _{W_x \times W_y}\\
			&	=-\frac{1}{2} \Big\langle 1-e^{q},\, t\beta\otimes\beta\Big\rangle _{\skriw \times \skriw}  
		\end{aligned}$$

		Hence,by the Gartner-Ellis theorem, conditional  on the  event $\Big\{M_{1}^{Y^\lambda}= \beta\Big\}$, $M_2^{Y^\lambda}$ obey a  large  deviation  principle with speed $\lambda$  and variational  formulation of  the  rate function is  given  by 
		$$ I_{\beta}(\phi) = \frac{1}{2}\sup_{q} \Big\{  \Big\langle q,\, \phi\Big\rangle _{\skriw \times \skriw}+  \Big\langle 1-e^{q},\, t\beta\otimes\beta\Big\rangle _{\skriw\times \skriw}\Big\}$$ 
		
		which   when solved,  see  example   \cite{DA2012},	will clearly   reduce  to  the good rate function  given by 
		\begin{equation}
			I_{\beta}(\phi)=  \frac{1}{2}\skrih(\phi\|t \beta\otimes\beta).
		\end{equation}
		
	\end{Proof}

	\subsection{ Proof of  Theorem~\ref{main1a}(ii)    by  Method  of  Mixtures.}\label{Sec4}For any $\lambda\in (0,\infty)$ we define
	$$\begin{aligned}
		\skrim_{\lambda}(\skriw) & := \Big\{ \beta\in \skrim(\skriw) \, : \, \lambda\beta(u) \in \N \mbox{ for all } u\in \skriw\Big\},\\
		\tilde \skrim_{\lambda }(\skriw\times \skriw) & := \Big\{ \phi\in
		\skrim(\skriw\times \skriw) \, : \, 
		\lambda \,\phi(u,v) \in \N,\,  \mbox{ for all } \, u,v\in \skriw
		\Big\}\, .
	\end{aligned}$$
	
	We denote by
	$\Upsilon_{\lambda}:=\skrim_{\lambda }(\skriw)$
	and
	$\Upsilon:=\skrim(\skriw)$.
	We  write 
	$$\begin{aligned}
		P_{ \beta_{\lambda}}^{(\lambda)}(\phi_{\lambda}) & := \prob\big\{M_2^{Y^\lambda}=\phi_{\lambda} \, \big| \, M_1^{Y^\lambda}=\beta_{\lambda}\big\}\, ,\\
		P^{(\lambda)}(\beta_{\lambda}) & :=
		\prob\big\{M_1^{Y^\lambda}=\beta_{\lambda}\big\}
	\end{aligned}$$

	Th joint distribution of $M_1^{Y^\lambda}$ and $M_2^{Y^\lambda}$ is
	the mixture of $P_{ \beta_{\lambda}}^{(\lambda)}$ with
	$P^{(\lambda)}(\beta_{\lambda}),$    as follows: 
	\begin{equation}\label{randomg.mixture}
		d\tilde{P}^{\lambda}( \beta_{\lambda}, \ell_{\lambda}):= dP_{	\beta_n}^{(\lambda)}(\ell_{\lambda})\, dP^{(\lambda)}( \beta_{\lambda}).\,
	\end{equation}

	(Biggins, Theorem 5(b), 2004) provides condition for the validity of
	large deviation principles for the mixtures and for the goodness of
	the rate function if individual large deviation principles are
	known. The following three lemmas ensure validity of these
	conditions.
	
	Note that the  family of
	measures $({P}^{(\lambda)} \colon \lambda\in(0,\infty))$  is  exponentially tight on
	$\Upsilon.$

	\begin{lemma}[] \label{Com4}

		\begin{itemize}
			
			\item[(u)] 	The  family of
			measures $(\tilde{P}^{\lambda} \colon \lambda\in(0,\infty))$  is  exponentially tight on
			$\Upsilon\times\tilde\skrim(\skriw\times \skriw).$
			
			\item[(ii)] The  family	measures $(T^{\lambda} \colon \lambda\in(0,\infty))$  is  exponentially tight on
			$\Upsilon\times\skrim(\skriw\times \skriw).$
		\end{itemize}
	\end{lemma}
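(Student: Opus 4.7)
The plan is to combine the given exponential tightness of $(P^{(\lambda)})$ on $\Upsilon$ with sharp conditional tail bounds on the total mass and the spatial localization of the empirical connectivity measure $M_2^{Y^\lambda}$. Since $\tilde P^{\lambda}$ is the mixture written in~(\ref{randomg.mixture}), exhibiting a product $K_1\times K_2$ of appropriate compacts whose marginal complement probabilities decay exponentially will yield the joint statement by a union bound.

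First I would invoke the hypothesis: for every $\alpha>0$ there is a $\tau$-compact $K_1 = K_1(\alpha)\subset\Upsilon$ with $\limsup_{\lambda\to\infty}\lambda^{-1}\log P^{(\lambda)}(K_1^c)\le -\alpha$. Because $\lambda a_\lambda\to 0$ forces $\lambda^2 a_\lambda = o(\lambda)$, the same bound remains valid on the slower scale $\lambda^2 a_\lambda$ that governs the joint LDP of Theorem~\ref{main1a}(ii); thus the first marginal requires no additional work.

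Second, for the connectivity marginal I would construct, for each $\alpha>0$, a $\tau$-compact $K_2 = K_2(\alpha)\subset\skrim(\skriw\times\skriw)$ whose complementary conditional probability is bounded by $e^{-\lambda^2 a_\lambda\,\alpha}$. Total-mass control follows from $\|M_2^{Y^\lambda}\| = 2|E|/(\lambda^2 a_\lambda)$ together with the observation that, conditionally on $M_1^{Y^\lambda}=\beta$, $|E|$ is a sum of independent Bernoullis with parameters $T^{\lambda}\sim a_\lambda t$, so a Chernoff estimate gives $\tilde P^{\lambda}\{\|M_2^{Y^\lambda}\|>C\}\le e^{-\lambda^2 a_\lambda\psi(C)}$ with $\psi(C)\to\infty$. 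Localization is handled by choosing an increasing family of bounded sets $\skriw_n\uparrow\skriw$ and bounding the mass of $M_2^{Y^\lambda}$ on $(\skriw_n\times\skriw_n)^c$ by another sum of conditionally independent Bernoullis whose total conditional mean vanishes as $n\to\infty$ (by finiteness of $\mu$ and integrability of $t$), to which the same Chernoff bound applies. The $\tau$-closure of the set of $\phi$ satisfying $\|\phi\|\le C$ together with $\phi((\skriw_n\times\skriw_n)^c)\le\delta_n$ is $\tau$-compact, and choosing $C$ large and $\delta_n$ small enough produces $K_2(\alpha)$.

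Finally, $K_1(\alpha)\times K_2(\alpha)$ is compact in the product topology and a union bound gives
$$\tilde P^{\lambda}\bigl((K_1\times K_2)^c\bigr) \le P^{(\lambda)}(K_1^c) + \int P^{(\lambda)}_{\beta}(K_2^c)\,dP^{(\lambda)}(\beta)\le 2e^{-\lambda^2 a_\lambda\alpha}$$
for all sufficiently large $\lambda$, which is (u). Part (ii) follows by the same argument, since the inclusion of the atomic space $\tilde\skrim(\skriw\times\skriw)$ into the ambient $\skrim(\skriw\times\skriw)$ is continuous, so the same product compact does the job. The main obstacle I anticipate is the localization step: the $\tau$-topology is strictly finer than the weak topology, so mere mass-tightness is insufficient, and one must upgrade to an equi-integrability condition while simultaneously showing that the Chernoff tail operates on the correct exponential scale $\lambda^2 a_\lambda$ uniformly in $\beta\in K_1(\alpha)$. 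That is where the bulk of the detailed computation lies.
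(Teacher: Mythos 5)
Your proposal cannot be checked line-by-line against the paper's argument, because the paper gives none: for this lemma it only points to \cite[Lemma~4.3]{SAD2020} ``for similar proof''. Your route -- exponential tightness of the first marginal $P^{(\lambda)}$ at speed $\lambda$ (hence at the slower speed $\lambda^2 a_\lambda$, since $\lambda a_\lambda\to 0$), a conditional Chernoff bound on $|E|$ giving $\tilde P^{\lambda}\{\|M_2^{Y^\lambda}\|>C\}\le e^{-\lambda^2 a_\lambda\psi(C)}$, a localization estimate, and a union bound over a product compact -- is the standard one, and the probabilistic estimates are sound: conditionally on $M_1^{Y^\lambda}=\beta$ the edge indicators are independent with success probabilities $T^{\lambda}\sim a_\lambda t$, so $\E\big[e^{\theta|E|}\,\big|\,\beta\big]\le\exp\big(\tfrac12(e^{\theta}-1)\lambda^2\langle T^{\lambda},\beta\otimes\beta\rangle\big)$, and the exponent is of order $\lambda^2a_\lambda$ uniformly over $\beta$ of bounded total mass, while $\|M_1^{Y^\lambda}\|$ exceeds a fixed constant only with probability $e^{-c\lambda}$, which is negligible at the relevant speed.

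The genuine gap is exactly the one you flag and then defer. The sets you exhibit, $\{\phi:\|\phi\|\le C,\ \phi((\skriw_n\times\skriw_n)^c)\le\delta_n\}$, are relatively compact for the weak topology (Prokhorov) but \emph{not} for the $\tau$-topology, which is the topology the lemma and the entire paper impose on $\skrim(\skriw\times\skriw)$; total-variation balls are not $\tau$-compact. A $\tau$-compactness criterion of Dunford--Pettis type is needed: a fixed reference measure $\nu$ (here $t\,\mu\otimes\skrik\otimes\mu\otimes\skrik$) and the uniform condition $\sup_{\phi\in K}\phi(A)\le\eps$ whenever $\nu(A)\le\delta$. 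Exponential negligibility of the complement of such a set requires more than mass control: one must show that $(\lambda^2a_\lambda)^{-1}\log\P\{M_2^{Y^\lambda}(A)>\eps\}\to-\infty$ \emph{uniformly} over measurable $A$ with $\nu(A)\le\delta$ as $\delta\downarrow 0$. This does follow from the same Chernoff computation applied to $\sum_{(u,v)\in E}\1_A$, but it is the step that actually carries the lemma and it is absent from your write-up; as it stands your $K_2(\alpha)$ is not a compact set in the stated topology. Two further caveats: part (ii) of the lemma as printed concerns the family ``$T^{\lambda}$'', which in the paper's notation is the connection kernel rather than a family of laws -- you silently reinterpret it, which is reasonable but should be stated; and since Theorem~\ref{main1a}(i) is an LDP at speed $\lambda$, the mixture argument there needs exponential tightness at speed $\lambda$, which your bound $e^{-\lambda^2a_\lambda\psi(C)}$ on the second marginal does not supply (indeed $\P\{\|M_2^{Y^\lambda}\|>C\}$ decays only on the scale $\lambda^2a_\lambda\ll\lambda$), so that case requires a separate treatment.
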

	
	We  refer to  \cite[Lemma~4.3]{SAD2020} for  similar  proof  for  Large  Deviation  Principle on  the  scale  $\lambda^2$
	
	Define the function
	$I_{sc}^2,I_{sc}^1\colon{\Upsilon}\times\skrim(\skriw\times \skriw)\rightarrow[0,\infty],$  by

	\begin{equation}
		\begin{aligned}
			I^{1}\big(\beta,\phi\big)= \left\{\begin{array}{ll}H\Big(\beta\Big |\mu\otimes\skrik \Big)&\,\,\mbox{ if $ \phi=t\beta\otimes\beta $ }\\
				\infty & \mbox{otherwise.}
			\end{array}\right.
		\end{aligned}
	\end{equation}

	\begin{equation}
		I^{2}\big(\beta,\nu\big)= \frac{1}{2} \skrih\Big(\nu\|t\beta\otimes\beta\Big).
	\end{equation}
	
	\begin{lemma}[]\label{Com5}
		\begin{itemize}
			
			\item[(u)]	$I^1$ is lower semi-continuous.
			\item  [(ii)] $I^2$  is  lower  semi-continuous.
		\end{itemize}
	\end{lemma}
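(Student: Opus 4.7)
The plan is to present both $I^1$ and $I^2$ as pointwise suprema over a family of jointly $\tau$-continuous functionals, from which lower semi-continuity is automatic.

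For statement (u), I would invoke the Donsker--Varadhan variational representation
$$H(\beta\|\mu\otimes\skrik)=\sup_{g}\bigl\{\langle g,\beta\rangle-\log\langle e^{g},\mu\otimes\skrik\rangle\bigr\},$$
the supremum ranging over bounded measurable $g$. Since $\beta\mapsto\langle g,\beta\rangle-\log\langle e^{g},\mu\otimes\skrik\rangle$ is $\tau$-continuous for each such $g$, this exhibits $H(\cdot\|\mu\otimes\skrik)$ as a supremum of continuous maps, hence as a $\tau$-lsc function of $\beta$. I would then verify that the constraint set
$$C:=\bigl\{(\beta,\phi)\in\Upsilon\times\skrim(\skriw\times\skriw)\colon \phi=t\beta\otimes\beta\bigr\}$$
is $\tau$-closed: given $(\beta_n,\phi_n)\to(\beta,\phi)$ with $\phi_n=t\beta_n\otimes\beta_n$, for any bounded measurable $h$ on $\skriw\times\skriw$ the product $th$ is bounded (since $t$ is bounded, being the pointwise limit of the uniformly bounded functions $a_{\lambda}^{-1}T^{\lambda}$ in the explicit integral form of $t_{\lambda}^{\skrid}$), so the box-decomposition technique used in Lemma~\ref{main1ca} yields $\langle h,\phi_n\rangle=\langle th,\beta_n\otimes\beta_n\rangle\to\langle th,\beta\otimes\beta\rangle=\langle h,t\beta\otimes\beta\rangle$; since also $\langle h,\phi_n\rangle\to\langle h,\phi\rangle$ by $\tau$-convergence, we conclude $\phi=t\beta\otimes\beta$. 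Consequently $I^1$ is the extension-by-infinity of a $\tau$-lsc function outside a closed set, and hence $\tau$-lsc on the whole product space.

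For statement (ii), I would exploit the variational characterization of $\skrih$ already derived inside the proof of Lemma~\ref{main1da}, namely
$$\skrih(\phi\|t\beta\otimes\beta)=\sup_{q}\bigl\{\langle q,\phi\rangle+\langle 1-e^{q},t\beta\otimes\beta\rangle\bigr\},$$
where $q$ ranges over bounded measurable functions on $\skriw\times\skriw$. For each such $q$, both $q$ and $(1-e^{q})t$ are bounded, so the functional $(\beta,\phi)\mapsto\langle q,\phi\rangle+\langle(1-e^{q})t,\beta\otimes\beta\rangle$ is jointly $\tau$-continuous in $(\beta,\phi)$ (the first term by definition of $\tau$, the second by the same product-convergence argument as above). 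Taking the pointwise supremum yields $\tau$-lsc of $(\beta,\phi)\mapsto\skrih(\phi\|t\beta\otimes\beta)$, and division by $2$ gives the claim for $I^2$.

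The main technical obstacle is justifying that $\tau$-convergence $\beta_n\to\beta$ entails $\beta_n\otimes\beta_n\to\beta\otimes\beta$ when tested against integrands of the form $th$ or $(1-e^{q})t$; this is not automatic in the $\tau$-topology in general, but for the measure class considered here it follows from the box-decomposition (Riemann-sum) argument already used in Lemmas~\ref{main1ca}--\ref{main1da}, together with the uniform boundedness of $t$, $e^{q}$, and $1-e^{q}$, which allow a dominated-convergence passage inside the double integrals.
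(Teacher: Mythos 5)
The paper offers no proof of Lemma~\ref{Com5} at all: the lemma is stated and the text passes immediately to the application of Biggins' mixture theorem, with only a pointer to \cite{SAD2020} for the companion tightness statement (Lemma~\ref{Com4}). So there is no argument of record to compare yours against. Your strategy --- writing $H(\cdot\,\|\,\mu\otimes\skrik)$ and $\skrih(\cdot\,\|\,t\beta\otimes\beta)$ as pointwise suprema of $\tau$-continuous affine functionals and handling the constraint $\phi=t\beta\otimes\beta$ via closedness of the constraint set --- is the standard and appropriate route, and both variational identities you invoke (Donsker--Varadhan for the probability-measure entropy, and $\skrih(\phi\|\nu)=\sup_q\{\langle q,\phi\rangle+\langle 1-e^q,\nu\rangle\}$ for the unnormalized one) are correct.

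Two steps, however, do not close as written. First, the boundedness of $t$: you derive it from $t$ being ``the pointwise limit of the uniformly bounded functions $a_\lambda^{-1}T^\lambda$,'' but the paper assumes only pointwise convergence to a finite-valued $t$, and since $a_\lambda\to 0$ the trivial bound $T^\lambda\le 1$ gives $a_\lambda^{-1}T^\lambda\le a_\lambda^{-1}\to\infty$; uniform boundedness of that family, hence of $t$, is an extra hypothesis you are importing. Second, and more seriously, the assertion that $\tau$-convergence $\beta_n\to\beta$ forces $\langle F,\beta_n\otimes\beta_n\rangle\to\langle F,\beta\otimes\beta\rangle$ for $F=th$ or $F=(1-e^{q})t$ is precisely the hard point, and the appeal to the box decomposition of Lemmas~\ref{main1ca}--\ref{main1da} does not settle it: the $\tau$-topology is generated by integration against bounded measurable functions of a \emph{single} variable, so one obtains convergence of $\langle F,\beta_n\otimes\beta_n\rangle$ only for $F$ that are uniform limits of finite sums of products $f(u)g(v)$, and a general bounded measurable $t$ admits such uniform approximation only under continuity-type regularity that is nowhere assumed. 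You flag this obstacle yourself, but flagging it is not closing it; as it stands, both the closedness of $\{\phi=t\beta\otimes\beta\}$ in part (u) and the joint $\tau$-continuity of $(\beta,\phi)\mapsto\langle(1-e^{q})t,\beta\otimes\beta\rangle$ in part (ii) remain unproven. The gap is repairable --- either assume explicitly that $t$ is bounded and continuous (so that $th$ is a uniform limit of simple product functions and the Riemann-sum argument applies), or, for part (ii), use that the paper's rate function is $+\infty$ off the closed slice $\{\beta=\mu\otimes\skrik\}$, on which the reference measure $t\beta\otimes\beta$ is fixed and only the $\tau$-continuous term $\langle q,\phi\rangle$ varies --- but one of these repairs must be made explicit.
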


	By (Biggins, Theorem~5(b), 2004) the two previous lemmas,  the  LDP  for  the  empirical  power  measure, see,  \cite[Theorem~2.1]{SAD2020} and the
	large deviation principles we have established
	Theorem~\ref{main1a} ensure
	that under $(\tilde{P}^{\lambda})$    and   $T^{\lambda}$ the random variables $(\beta_{\lambda}, \ell_{\lambda})$   satisfy a large deviation principle on
	$\skrim(\skriw) \times \skrim(\skriw\times \skriw)$ and   	$\Upsilon\times\skrim_{\lambda}(\skriw\times \skriw)$  on  the  speeds  $\lambda$ and  $\lambda^2 a_{\lambda}$ with good rate functions  $I^1$   and $I^2$  respectively,  which  ends  the  proof of  Theorem~\ref{main1a}.

	\section{Proof of   Theorem~\ref{main1b} by Large  deviations }\label{Sec4}

	To  prove the  Shannon-Mcmillian Breiman (SMB)  or  the  AEP,  we first prove  a  weak  law  of large  numbers (WLLN) for   the empirical  marked  measure  and  the  empirical connectivity  measure  of  the  SINR network model.
	\begin{lemma}\label{WLLN}	Let   $Y^{\lambda}$  be a sub-critical marked SINR  model  with rate measure
		$\lambda \mu:\skrid \to [0,1]$ and   a   marked transition  function  $\skrik(\cdot, y)=c{e}^{-cy}, y>0$   and  path  loss  function   $\pi(\eta)=\eta^{-\alpha}, $  for  $\alpha>0.$  Thus, the link  probability  $T^{\lambda} $  of   $Y^{\lambda}$  satisfies  $a_{\lambda}^{-1}T^{\lambda}\to t$  and  $\lambda a_{\lambda} \to 0.$
		Then,	$$\lim_{\lambda\to\infty}\P\Big\{\sup_{(a,\ell_a)\in\skriw}\Big|M_1^{Y^\lambda}(a,\ell_a)-\mu\otimes \skrik(a,\ell_a) \Big|>\eps\Big\}=0$$ and  
		$$\lim_{\lambda\to\infty}\P\Big\{\sup_{([y_u,\ell_u],[y_v,\ell_v])\in\skriw\times\skriw}\Big|M_2^{Y^\lambda}([u,\ell_u],[y_v,\ell_v])-t\mu\otimes \skrik\times \mu\otimes \skrik([y_u,\ell_u],[y_v,\ell_v]) \Big|>\eps\Big\}=0$$
	\end{lemma}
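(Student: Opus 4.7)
The plan is to reduce both uniform weak laws to the joint large deviation principle of Theorem~\ref{main1a}(i). Its good rate function $I^{1}(\beta,\phi)=H(\beta\|\mu\otimes\skrik)$ on the feasible set $\{\phi=t\beta\otimes\beta\}$ vanishes precisely at
\[(\beta^{\ast},\phi^{\ast})=\bigl(\mu\otimes\skrik,\,t(\mu\otimes\skrik)\otimes(\mu\otimes\skrik)\bigr),\]
since $H(\cdot\|\mu\otimes\skrik)$ has $\mu\otimes\skrik$ as its only zero and the constraint then forces $\phi=\phi^{\ast}$. This is exactly the conjectured weak limit, so it suffices to upgrade the $\tau$-convergence in probability that the LDP delivers to the pointwise uniform statement appearing in the lemma.

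\textbf{Reduction via finite partitions.} Fix $\eps>0$ and, as in the proof of Lemma~\ref{main1ca}, pick a finite partition $W_{1},\dots,W_{n}$ of $\skrid\times\R_{+}$ on which the density of $\mu\otimes\skrik$ and the kernel $t$ are essentially constant. Consider the $\tau$-open neighbourhood
\[V_{n,\eps}=\bigcap_{x,y}\bigl\{(\beta,\phi):\,|\beta(W_{x})-\beta^{\ast}(W_{x})|<\eps,\;|\phi(W_{x}\!\times\! W_{y})-\phi^{\ast}(W_{x}\!\times\! W_{y})|<\eps\bigr\}.\]
Lower semicontinuity and goodness of $I^{1}$, together with uniqueness of its zero, yield $\delta_{n,\eps}:=\inf_{V_{n,\eps}^{c}}I^{1}>0$, and the LDP upper bound at speed $\lambda$ gives
\[\P\bigl\{(M_{1}^{Y^\lambda},M_{2}^{Y^\lambda})\notin V_{n,\eps}\bigr\}\le \exp\bigl(-\lambda\delta_{n,\eps}/2\bigr)\longrightarrow 0.\]
Letting $n\to\infty$ and exploiting uniform continuity of the intensity $\mu\otimes\skrik$ and of the kernel $t$ on each cell upgrades the cell-wise control to the pointwise supremum.

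\textbf{Main obstacle and shortcut.} The principal difficulty is translating $\tau$-closeness, which sees only pairings against bounded measurable functions, into the pointwise supremum in the statement; this requires control of $t$ and of the reference intensity down to arbitrarily fine scales, using local finiteness of $\skriw=\skris(\skrid\times\R_{+})$ to ensure that every bounded window contains only finitely many sites. A cleaner shortcut, which is the route I would actually adopt in the write-up, bypasses the LDP entirely: on each cell $\lambda M_{1}^{Y^\lambda}(W_{x})$ is Poisson with mean $\lambda\,\beta^{\ast}(W_{x})$ and concentrates at Chebyshev rate $1/\lambda$; conditionally on the marked PPP, the edge indicators are independent Bernoullis with means $T^{\lambda}\sim a_{\lambda}t$, so $\lambda^{2}a_{\lambda}M_{2}^{Y^\lambda}(W_{x}\!\times\! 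W_{y})$ concentrates at rate $1/(\lambda^{2}a_{\lambda})$ around its mean $t\beta^{\ast}\!\otimes\!\beta^{\ast}(W_{x}\!\times\! W_{y})$. A union bound over the finite partition followed by refinement in $n$ then delivers the uniform weak law in both parts of the lemma.
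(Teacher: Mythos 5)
Your first route is essentially the paper's own proof. The paper introduces the exceptional sets $U_{1,\skriw}$ and $U_{2,\skriw}$ on which the respective suprema exceed $\eps$, applies the upper bound of Theorem~\ref{main1a} at speed $\lambda$ to their union, and argues via a minimizing sequence (i.e.\ lower semicontinuity and goodness of the rate function) that the infimum of $I^{1}$ over that set is strictly positive, because the only zero of $I^{1}$ is $\big(\mu\otimes\skrik,\,t\,\mu\otimes\skrik\otimes\mu\otimes\skrik\big)$, which lies outside it. You differ from the paper in two respects. First, you explicitly address the mismatch between the $\tau$-topology, in which Theorem~\ref{main1a} is stated, and the pointwise supremum appearing in the lemma, by interposing a finite partition $W_{1},\dots,W_{n}$ and $\tau$-open neighbourhoods $V_{n,\eps}$ before refining; the paper simply treats the sup-deviation sets as admissible closed sets for the LDP without comment, so your partition step is filling a gap the paper leaves open rather than duplicating it. Second, your preferred shortcut --- cell-wise Chebyshev concentration of the Poisson counts $\lambda M_{1}^{Y^{\lambda}}(W_{x})$ and of the conditionally Bernoulli edge counts with means of order $a_{\lambda}t$, followed by a union bound over the finite partition and refinement in $n$ --- is a genuinely different and more elementary argument that does not appear in the paper: it bypasses Theorem~\ref{main1a} entirely and yields only polynomial rates, but it is self-contained and avoids any reliance on the topology in which the LDP holds. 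Either route establishes the lemma; the paper's version is shorter but rests on the unexamined identification of sup-neighbourhoods with $\tau$-neighbourhoods that your write-up would make precise.
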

	
	\begin{proof}
		Let
		$$ U_{1,\skriw}=\Big\{\beta:\sup_{(a,\ell_a)\in\skriw}|\beta(a,\ell_a)-\mu\otimes \skrik(a,\ell_a)|>\eps\Big\},$$ $$U_{2,\skriw}=\Big\{\phi:\sup_{([u,\ell_u],[j,\ell_v])\in\skriw\times\skriw}|\phi([y_u,\ell_u],[y_v,\ell_v])-t\mu\otimes \skrik\times \mu\otimes \skrik([y_u,\ell_u],[y_v,\ell_v])|>\eps\Big \}$$
		
		and  $U_{3,\skriw}=U_{1,\skriw}\cup U_{2,\skriw}.$     Now, observe  from  Theorem~\ref{main1a}  that
		
		$$\lim_{\lambda\to\infty}\frac{1}{\lambda}\log \P\Big\{(M_1^{Y^\lambda},M_2^{Y^\lambda})\in U_{3,\skriw}^{c}\Big \}\le -\inf_{(\beta,\phi) \in  F_{3,\skriw}^{c}}I(\beta,\phi).$$
		
		It  meets the requirement  for  the study  to  prove  that   $I$  is strictly  positive. For instance,there  is  a  sequence  $(\beta_n,\phi_n)\to(\beta,\phi)$   such  that  $I(\beta_{\lambda},\phi_{\lambda})\downarrow I(\beta,\phi)=0.$  This  means  $\beta=\mu\otimes \skrik$  and  $\phi=t\mu\otimes \skrik\times \mu\otimes \skrik$  which  contradicts  $(\beta,\phi)\in U_3^{c}.$  This  ends  the  proof of  the  Lemma.
	\end{proof} 
	
We write $M_{\Delta}^{\lambda}=\frac{1}{\lambda}\sum_{u\in I} \delta_{(\sigma^\lambda,\sigma^\lambda)}$  and  observe  that the  distribution  of  the  marked  SINR random network  $P(y)=\P\Big\{Y^{\lambda}=y\Big\}$  is  given  by  $$P_{\lambda}(u)=\prod_{u=1}^{I}|\mu\otimes \skrik(y_u,\ell_u)\prod_{(u,v)\in E}\frac{T^{r^\lambda}([y_u,\ell_u],[y_v,\ell_v])}{1-T^{\lambda}([y_u,\ell_u],[y_v,\ell_v])}\prod_{(u,v)\in \skrie} (1-T^{\lambda}([y_u,\ell_u],[y_v,\ell_v]))\prod_{x=1}^{I}(1-T^{\lambda}([y_u,\ell_u],[y_v,\ell_v]))$$
	
	$$\begin{aligned}-\frac{1}{a_{\lambda}\lambda^2\log \lambda}\log P_{\lambda}(y)&=\frac{1}{a_{\lambda}\lambda\log \lambda}\Big\langle -\log \mu\otimes T\,,M_1^{Y^\lambda}\Big\rangle +\frac{1}{\log \lambda}\Big \langle -\log \Big(\sfrac{T^{r^\lambda}}{1-T^{r^\lambda}}\Big ) \,,M_{2}^{Y^\lambda}\Big \rangle\\
		& +\frac{1}{a_{\lambda}\log \lambda}\Big\langle-\log (1-T^{r^\lambda})\,,M_1^{Y^\lambda}\otimes M_{1}^{Y^\lambda}\Big\rangle +\frac{1}{a_{\lambda}\lambda\log\lambda}\Big\langle-\log (1-T^{\lambda})\,,M_{\Delta}^{\lambda}\Big\rangle
	\end{aligned}  $$
	
	Notice,        $$\displaystyle \lim_{\lambda\to\infty}\frac{1}{a_{\lambda}\lambda\log \lambda}\Big\langle -\log \mu\otimes \skrik\,,M_1^{Y^\lambda}\Big\rangle=\lim_{\lambda\to \infty}\frac{1}{\lambda}\Big\langle-\log (1-T^{\lambda}\,,M_{\Delta}^{\lambda}\Big\rangle=\lim_{\lambda\to\infty}\frac{1}{a_{\lambda}\log \lambda}\Big\langle-\log (1-T^{r^\lambda})\,,M_1^{Y^\lambda}\otimes M_{1}^{Y^\lambda}\Big\rangle=0.$$
	
	Using,  Lemma~\ref{WLLN}  we  have  
	
	$$\lim_{\lambda\to\infty}\frac{1}{\log \lambda}\Big \langle -\log \Big(T^{\lambda}/(1-T^{\lambda}\Big) \,,M_{2}^{Y^\lambda}\Big \rangle=\Big \langle \1 \,, t\mu\otimes \skrik\times \mu\otimes \skrik\Big \rangle$$
	
	which  concludes  the  proof  of  Theorem~\ref{main1b}.
	
	\section{Proof  of  Theorem~\ref{main1c}\label{Sec5}, Corollary~\ref{cardinality}, Corollary~\ref{main2d}}
	For  $\beta\in\skrim(\skriw)$  we  define  the  spectral  potential  of  the  marked  SINR graph $(Y^{\lambda})$  conditional  on  the  event  $\big\{M_1^{Y^\lambda}=\beta \big\},$   $\rho_t(q,\beta) $  as
	
	\begin{equation}\label{LLDP.equ1}
		\rho_{t}(q,\beta)= \Big \langle -(1-e^q)\,,\,t\beta\otimes \beta \Big \rangle.
	\end{equation}
	
	Note that  remarkable  properties  of  a  spectral  potential,  see \cite{BIV15}  or \cite{SAD2020}  holds  for  $\rho_{t} $.
	
	For $\beta\in\skrim(\skriw\times\skriw)$, we  observe that  $I_{\beta}( \phi)$ is the  Kullback  action  of  the marked  SINR graph $Y^{\lambda}$.
	
	\begin{lemma}\label {LLDP.equ2} The  following  hold  for the  Kullback  action or divergence function $I_{\beta}(\phi)$:
		\begin{itemize}
			\item $$I_{\beta}( \phi)=\sup_{g\in \skric}\big\{\langle g,\, \phi\rangle-\phi_{t}(g,\beta) \big\}$$ 
			\item  The  function  $I_{\beta}( \phi) $  is  convex  and  lower semi-continuous   on  the  space  $\skrim(\skriw\times\skriw).$
			\item For  any real  $\alpha$,  the  set  $\Big\{ \phi\in \skrim(\skriw\times\skriw): \, I_{\beta}( \phi)\le \alpha \Big\}$  is  weakly  compact.
		\end{itemize}
	\end{lemma}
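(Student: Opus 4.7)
The plan is to establish the three assertions using Legendre--Fenchel duality together with the conditional LDP already proven in Lemma \ref{main1da}.

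For the variational formula in (i), I would recognize that the spectral potential $\rho_{t}(g,\beta) = \langle e^g - 1, t\beta\otimes\beta\rangle$ is (up to the symmetry factor $1/2$ inherent in $M_2^{Y^\lambda}$) the cumulant generating function computed in the proof of Lemma \ref{main1da}. To prove the identity directly, I would compute the Legendre--Fenchel conjugate: for $\phi$ absolutely continuous with respect to $t\beta\otimes\beta$, the first-order condition on the inner expression $\langle g, \phi\rangle - \langle e^g - 1, t\beta\otimes\beta\rangle$ yields the optimizer $e^{g^{\star}} = d\phi/d(t\beta\otimes\beta)$, and substituting back gives
$$\sup_{g} \{\langle g, \phi\rangle - \rho_t(g, \beta)\} = H(\phi \,\|\, t\beta\otimes\beta) + \|t\beta\otimes\beta\| - \|\phi\| = \skrih(\phi \,\|\, t\beta\otimes\beta).$$
When $\phi$ is not absolutely continuous with respect to $t\beta\otimes\beta$, one constructs a sequence $g_n$ supported on the singular part to drive the supremum to $+\infty$, consistent with the definition of $\skrih$.

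Convexity and lower semi-continuity in (ii) then follow immediately: each map $\phi \mapsto \langle g, \phi\rangle - \rho_t(g,\beta)$ is affine in $\phi$ (since $\rho_t(g,\beta)$ does not depend on $\phi$) and $\tau$-continuous whenever $g$ is bounded measurable. Hence $I_\beta(\cdot)$, as a pointwise supremum over such $g$, is automatically convex and $\tau$-lower semi-continuous on $\skrim(\skriw \times \skriw)$.

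For the weak compactness of level sets in (iii), which I expect to be the main obstacle, my plan is to invoke the goodness of the rate function obtained in Lemma \ref{main1da}: the conditional LDP established there is with the good rate function $\tfrac{1}{2}\skrih(\cdot \,\|\, t\beta\otimes\beta)$, whose sublevel sets are by definition $\tau$-compact in $\skrim(\skriw\times\skriw)$. A more self-contained alternative is to apply the classical result that sublevel sets of the relative entropy $H(\cdot \,\|\, m)$ are $\tau$-compact for any finite measure $m$ (via a de la Vall\'ee--Poussin uniform-integrability argument on the Radon--Nikodym densities $d\phi/dm$), then observe that the additional term $\|t\beta\otimes\beta\| - \|\phi\|$ in the definition of $\skrih$ bounds the total mass $\|\phi\|$ uniformly on any sublevel set, and combine with the lower semi-continuity from (ii) to conclude. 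The delicate point is that the $\tau$-topology is not metrizable, so the compactness argument must rely on uniform integrability of the densities rather than sequential compactness.
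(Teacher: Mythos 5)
The paper does not actually prove this lemma: it states that the proof ``is excluded from the article'' and defers to \cite{SKAD2020} and, implicitly, to Bakhtin \cite{BIV15}, so there is no in-paper argument to match yours against. Your proposal supplies precisely the standard argument that those references contain, and it is essentially correct: the Legendre--Fenchel computation with optimizer $e^{g^\star}=d\phi/d(t\beta\otimes\beta)$ does return $H(\phi\|t\beta\otimes\beta)+\|t\beta\otimes\beta\|-\|\phi\|=\skrih(\phi\|t\beta\otimes\beta)$, the singular case is correctly handled by letting $g_n=n\1_A$ on a set $A$ with $\phi(A)>0$ and $t\beta\otimes\beta(A)=0$, and convexity together with $\tau$-lower semi-continuity follows from writing $I_\beta$ as a supremum of affine, $\tau$-continuous functionals. (You should be aware that the paper is internally inconsistent about a factor of $\tfrac12$ between the statement of this lemma and the definition of $I_\beta$ in Lemma~\ref{main1da}; your computation matches the unhalved version in the lemma statement.) The one imprecision is in (iii): the term $\|t\beta\otimes\beta\|-\|\phi\|$ does \emph{not} by itself bound $\|\phi\|$ on a sublevel set, since $-\|\phi\|$ decreases as the mass grows. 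What gives the mass bound is the combination $H(\phi\|m)-\|\phi\|=\int(\varphi\log\varphi-\varphi)\,dm$ with $\varphi=d\phi/dm$, which grows superlinearly in $\|\phi\|$ by Jensen's inequality ($\skrih(\phi\|m)\ge\|\phi\|\log(\|\phi\|/\|m\|)-\|\phi\|+\|m\|$). Once that is corrected, the de la Vall\'ee--Poussin / Dunford--Pettis argument for uniform integrability of the densities, combined with the lower semi-continuity from (ii), does yield $\tau$-compactness of the level sets, and your remark that one cannot rely on sequential arguments in the non-metrizable $\tau$-topology is well taken.
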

	The  proof  of  Lemma~\ref{LLDP.equ2} is  excluded from  the  article. Scholars of interest  may infer  to  \cite{SKAD2020}  for  likewise proof for empirical  measures of ` the  supercritical marked SINR random network processes  and/or the  references therein  for proof of  the  lemma for  empirical measures  on  measurable  spaces.\\

	Note  from  Lemma~\ref{LLDP.equ2}  that,  for  any  $\eps>0$, there  exists some  function  $q\in\skriw\times\skriw$   such  that  
	$$I_{\beta}(\phi)-\sfrac{\eps}{2}< \langle q\,,\, \beta \rangle -\phi_{t}(q,\phi).$$
	
	We define  the  probability  distribution  of  the  powered $R$   by $P_{\beta} $  by  
	
	$$P_{\beta}(y)=\prod_{(u,v)\in E}e^{q(u,v)}\prod_{(u,v)\in \skrie}e^{g_\lambda(u,v)}	,$$
	
	where  $$g_{\lambda}(u,v)=\frac{1}{ a_{\lambda}}\log\Big[1-T^{\lambda}(u,v)+T^{\lambda}(u,v)e^{q(u,v)}\Big]$$
	Then,  clearly  that  
	
	$$\begin{aligned}
		\frac{dP_{\beta}}{d\tilde{P}_{\beta}}(y)&=\prod_{(u,v)\in E}e^{-q(u,v)}\prod_{(u,v)\in \skrie} e^{-g_{\lambda}(u,v) a_{\lambda}}\\
		&= e^{-\lambda^2 a_{\lambda} (\langle\sfrac{1}{2} q,M_2^{Y^\lambda}\rangle-\lambda^2 a_{\lambda} \langle \sfrac{1}{2}h_{\lambda},M_1^{Y^\lambda}\otimes M_1^{Y^\lambda}\rangle )+\langle \sfrac{1}{2} g_{\lambda}, M_{\Delta}^{\lambda} \rangle }
	\end{aligned}$$

	Now  define  the  neighbourhood  of  $\phi,$    $B_{\phi}$ by
	$$B_{\phi}:=\Big\{\omega\in \skrim(\skriw\times\skriw): \, \langle q, \omega \rangle-\rho_t(q,\beta) >\langle q, \phi\rangle -\rho_t(q,\phi)-\eps/2 \Big \}$$

	Note  that  under the  condition  $M_{2}^{Y^\lambda}\in B_{\phi}$   we  have

	$$\begin{aligned}
		\frac{dP_{\beta}}{d\tilde{P}_{\beta}}(y)< e^{-\lambda^2 a_{\lambda} (\langle\sfrac{1}{2} g,L_2^{\lambda}\rangle-\lambda^2 a_{\lambda} \langle \sfrac{1}{2}h_{\lambda},M_1^{Y^\lambda}\otimes M_1^{Y^\lambda}\rangle )+\langle \sfrac{1}{2} h_{\lambda}, M_{\Delta}^{\lambda} \rangle }<e^{-\lambda^2 a_{\lambda}  I_{sc}(\nu)+\lambda^2a_{\lambda} \eps} \end{aligned}$$
	
	Thus,  the study can deduce that 
	
	$$P_{\beta }\Big\{Y^{\lambda }\in\skrig\Big | M_2^{Y^\lambda}\in D_{\phi}\Big\}\le \int \1_{\{M_2^{Y^\lambda}\in D_{\phi}\}} d\tilde{P}_{\beta}(Y^{\lambda})\le \int e^{-\lambda^2 a_{\lambda} I_{sc}(\beta)-\lambda\eps}d\tilde{P}_{\beta}(Y^{\lambda}) \le e^{-\lambda^{2}a_{\lambda} I_{sc}(\phi)-\lambda^2 a_{\lambda}\eps}.$$.
	
	Given that $I^2 (\phi)=0$ means  Theorem ~\ref{main1b} (ii),  hence  it is enough us to obtain  that  the  result is  true  for  a probability  distribution  of  the form $\phi=e^{q}\beta\otimes\beta$  and  for  $I^2(\phi)=\sfrac{1}{2}\skrih(\phi\|t\beta\otimes\beta),$ where  $\beta=\mu\otimes\skrik$.  Fix  any  number  $\eps>0$  and  any  neigbourhood $B_{\phi}\subset \skrim(\skriw\times\skriw)$.  Now  define  the  sequence  of  sets  $$\skrig^{\lambda}=\Big\{ y^{\lambda}\in \skrig: M_{2}^{y^\lambda}\in B_{\phi}\Big |\langle q, M_{2}^{y^\lambda}\rangle-\rho_{t}(q,\beta)\Big |\le \sfrac{\eps}{2}\Big\} .$$     
	
	Note that  for  all  $q\in \skrig^{\lambda}$ we  have    
	
	$$\begin{aligned}
		\frac{dP_{\beta}}{d\tilde{P}_{\beta}}(y)> e^{-\lambda^2 a_{\lambda}\langle\sfrac{1}{2} q,\phi\rangle+\lambda^2 a_{\lambda}\phi_{t}(q,\,\beta)+\lambda^{2} a_{\lambda}\sfrac{\eps}{2}}\end{aligned}.$$
	
	This  yields  
	$$P_{\pi}(\skrig^{\lambda}) =\int_{\skrig^{\lambda}}dP_{\beta}(y)\ge\int e^{-\lambda^2 a_{\lambda}\langle\sfrac{1}{2} g,\nu\rangle+\lambda^2 a_{\lambda}\rho_{t}(g,\,\beta)+\lambda^2 a_{\lambda} \sfrac{\eps}{2}}d\tilde{P}_{\beta}(y)\ge e^{-\lambda^2 a_{\lambda}\sfrac{1}{2}\skrih(\nu\|t\beta\otimes\beta)+\lambda^2 a_{\lambda} \eps}\tilde{P}_{\beta}(\skrig^{\lambda}).$$
	Applying  the law  of  large  numbers, we  have  that  $\lim_{\lambda\to\infty}\tilde{P}_{\beta}(\skrig^{\lambda})=1.$  This  completes  of  the  Theorem.\\

	{\bf  Proof of  Corollary~\ref{cardinality}}
	
	The  proof  of  Corollary~\ref{cardinality}  follows  from  the  definition  of  the  Kullback action  and  Theorem~\ref{main1c} if  we  set   $\beta=\mu\otimes \skrik$  and  $\lambda\beta\otimes\beta(a,b)=\|\lambda\beta\otimes\beta\|,$  for  all  $(a,b)\in\skriy\times\skriy.$\\

	{\bf  Proof of  Corollary~\ref{main2d}}
	
	In this scenario, the result was obtained by  Lemma~\ref{Com4} the  law  of   empirical link measure is exponentially  tight. Moreover, without loss of generality, we can assume that the
	set $U$  in Corollary~\ref{main2d}(ii) above is relatively compact. If the study chooses any $\eps> 0$; then for each functional  $\phi\in U$ the researchers can find a weak neighborhood such that the estimate of Theorem~\ref{main1c}(u) above holds. From
	all these neighborhood, the study select a finite cover of $\skrig$ and sums up over the value in Corollary~\ref{main2d}(u) above to obtain   	$$\limsup_{\lambda\to\infty}\frac{1}{\lambda}\log \P_{\beta}\Big\{Y^{\lambda}\in \skrig\,\Big|\, M_2 ^{\lambda}\in U\Big\}\le -\inf_{\phi\in U}I_{\beta}(\phi)+\eps,\,\,\mbox{ where $\beta=\mu\otimes \skrik$.}$$

	As $\eps$  was arbitrarily chosen and the lower bound in Theorem~\ref{main1a}(ii) means in the lower bound in
Theorem~\ref{main2d} holds, the study obtains  the desired results which completes the proof.
	
	\section{Conclusion}\label{Sec6}

	The study provided a   joint  large  deviation  principle  for  the  empirical  power  measure  and  the  empirical connectivity  measure   of  telecommunication  networks in   the  $\tau-$  topology. Adopting the concept of the  large  deviations,we have proved Shannon-McMillian Breiman Theorem  for  the  telecommunication  network  modelled  as  the sub-critical  SINR  network  model. In addition, we have  proved  a  local  large  deviation  principle  for  the empirical connectivity  measure  given  the  empirical power  measure   and  from  this  result;we have obtained the  classical  McMillian  theorem and  for  a given PPP.  Finally, we have  obtained  an  asymptotic   bound   on  the  set of all possible  sub-critical SINR  network  processes . Conclusively,  we  have presented   large  deviation  principles for  the  sub-critical SINR  networks. Note,  that  our  results may  form  the  bases  for  designing  an  anomaly inference  algorithms  for  subcritical  wireless  telecommunication  network  models. 
	

\end{document}